\newcommand{\noop}[1]{}
\tikzset{
  symbol/.style={
    draw=none,
    every to/.append style={
      edge node={node [sloped, allow upside down, auto=false]{$#1$}}}
  }
}
\newcounter{dummy}
\newtheorem{thm}[dummy]{Theorem}
\newtheorem{prop}[dummy]{Proposition}
\newtheorem{cor}[dummy]{Corollary}
\newtheorem{qstn}[dummy]{Question}
\theoremstyle{definition}
\newtheorem{rmk}[dummy]{Remark}
\DeclareMathOperator{\aff}{\mathrm{aff}}
\DeclareMathOperator{\ant}{\mathrm{ant}}
\DeclareMathOperator{\id}{id}
\DeclareMathOperator{\Gal}{\mathrm{Gal}}
\DeclareMathOperator{\Lie}{\mathrm{Lie}}
\DeclareMathOperator{\Hom}{\mathrm{Hom}}
\DeclareMathOperator{\Mor}{\mathrm{Mor}}
\renewcommand{\@biblabel}[1]{[#1]\hfill}
\begin{document}
\title[On morphisms between conn. comm. alg. groups over a field of char. $0$]{On morphisms between connected commutative algebraic groups over a field of characteristic $0$}

\author[G. A. Dill]{Gabriel A. Dill}
\address{Leibniz Universit\"at Hannover, Institut f\"ur Algebra, Zahlentheorie und Diskrete Mathematik, Welfengarten 1, 30167 Hannover, Germany}
\email{dill@math.uni-hannover.de}

\subjclass[2010]{14L10, 14L40}
\date{\today}
\maketitle

\begin{abstract}
Let $K$ be a field of characteristic $0$ and let $G$ and $H$ be connected commutative algebraic groups over $K$. Let $\Mor_0(G,H)$ denote the set of morphisms of algebraic varieties $G \to H$ that map the neutral element to the neutral element. We construct a natural retraction from $\Mor_0(G,H)$ to $\Hom(G,H)$ (for arbitrary $G$ and $H$) which commutes with the composition and addition of morphisms. In particular, if $G$ and $H$ are isomorphic as algebraic varieties, then they are isomorphic as algebraic groups. If $G$ has no non-trivial unipotent group as a direct factor, we give an explicit description of the sets of all morphisms and isomorphisms of algebraic varieties between $G$ and $H$. We also characterize all connected commutative algebraic groups over $K$ whose only variety automorphisms are compositions of automorphisms of algebraic groups with translations.
\end{abstract}

In this note, we work over a ground field $K$ of characteristic $0$ that is not necessarily algebraically closed. We define two categories $\mathfrak{C}$ and $\mathfrak{D}$: they both have as objects the connected commutative algebraic groups over $K$. Given two such algebraic groups $G$ and $H$, the morphisms from $G$ to $H$ in $\mathfrak{C}$ are all homomorphisms of algebraic groups between them while the morphisms from $G$ to $H$ in $\mathfrak{D}$ are all morphisms of pointed algebraic varieties between them (with the neutral element being the respective distinguished point). There is a canonical inclusion functor $\mathcal{F}: \mathfrak{C} \to \mathfrak{D}$ as $\mathfrak{C}$ is a subcategory of $\mathfrak{D}$.

The existence statement in the following theorem and the construction in its proof were suggested by B. Kahn as a strengthening of an independent earlier proof of Corollary \ref{cor:main} below by the author. The proof of the automatic additivity and uniqueness of the constructed functor is due to an anonymous referee of this note.

\begin{thm}\label{thm:main}
There exists one and only one functor $\mathcal{G}: \mathfrak{D} \to \mathfrak{C}$ such that $\mathcal{G} \circ \mathcal{F} = \id_{\mathfrak{C}}$. Furthermore, this $\mathcal{G}$ commutes with the addition of morphisms, i.e., $\mathcal{G}(\phi_1+\phi_2) = \mathcal{G}(\phi_1) + \mathcal{G}(\phi_2)$ for all morphisms of pointed algebraic varieties $\phi_i: G \to H$ ($i = 1,2$).
\end{thm}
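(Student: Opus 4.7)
I would construct $\mathcal{G}(\phi)$ for a pointed variety morphism $\phi\colon G\to H$ using the structure theory of connected commutative algebraic groups over a field of characteristic $0$. The Chevalley theorem provides a canonical exact sequence $0\to L\to G\to A\to 0$ with $L$ linear and $A$ an abelian variety, and in characteristic $0$ the linear part decomposes canonically as $L=T\times U$ with $T$ a torus and $U$ a vector group. Two rigidity inputs then govern the construction: any pointed variety morphism between semi-abelian varieties is automatically a group homomorphism, via the universal property of Serre's generalized Albanese; and any pointed morphism between commutative unipotent groups admits a canonical ``linear part'' given by its differential at the neutral element. Combining these consistently across the various components of $G$ and $H$ yields the desired homomorphism $\mathcal{G}(\phi)$. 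The principal obstacle, and the heart of Kahn's suggestion, is to produce this retraction naturally for arbitrary possibly non-split extensions (such as the universal vector extension of an abelian variety) and over a non-algebraically-closed ground field, while simultaneously verifying compatibility with composition so that $\mathcal{G}$ is in fact a functor.

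\textbf{Additivity.} Once existence is secured, additivity follows formally from functoriality. Given $\phi_1,\phi_2\in\Mor_0(G,H)$, factor $\phi_1+\phi_2=m_H\circ(\phi_1,\phi_2)$, where $m_H\colon H\times H\to H$ is the group law of $H$ (a homomorphism) and $(\phi_1,\phi_2)\colon G\to H\times H$ is the paired morphism $x\mapsto(\phi_1(x),\phi_2(x))$. The projections $\pi_i\colon H\times H\to H$ are homomorphisms, so the identities $\pi_i\circ\mathcal{G}((\phi_1,\phi_2))=\mathcal{G}(\pi_i\circ(\phi_1,\phi_2))=\mathcal{G}(\phi_i)$, combined with the universal property of the product in $\mathfrak{C}$, force $\mathcal{G}((\phi_1,\phi_2))=(\mathcal{G}(\phi_1),\mathcal{G}(\phi_2))$. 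Applying $\mathcal{G}$ to the factorization and using $\mathcal{G}(m_H)=m_H$ then gives $\mathcal{G}(\phi_1+\phi_2)=m_H\circ(\mathcal{G}(\phi_1),\mathcal{G}(\phi_2))=\mathcal{G}(\phi_1)+\mathcal{G}(\phi_2)$.

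\textbf{Uniqueness.} For two such functors $\mathcal{G}_1,\mathcal{G}_2$, set $\mathcal{H}(\phi):=\mathcal{G}_1(\phi)-\mathcal{G}_2(\phi)\in\Hom(G,H)$. By the additivity just proven, $\mathcal{H}$ is additive; it vanishes on $\Hom$; and it commutes with pre- and post-composition by homomorphisms, so in particular $\mathcal{H}(\phi\circ[n]_G)=n\mathcal{H}(\phi)=\mathcal{H}([n]_H\circ\phi)$ for every $n\in\mathbb{Z}$. These relations alone are consistent but do not immediately force $\mathcal{H}=0$; to conclude, one must compare $\mathcal{H}(\phi)$ with structural invariants of $\phi$, such as its differential at the neutral element. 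Since in characteristic $0$ the formal group of a connected commutative algebraic group is linearizable and a homomorphism is determined by its linear part on this formal completion (the endomorphism ring embedding into the endomorphisms of the Lie algebra), it suffices to show that $\mathcal{H}(\phi)$ induces the zero map on the formal completion at the origin, which should follow from the explicit form of Kahn's construction together with the relations satisfied by $\mathcal{H}$. I expect this comparison to be the subtlest step and the principal technical content of the referee's argument.
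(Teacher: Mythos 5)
Your additivity argument is exactly the paper's (factor $\phi_1+\phi_2$ through $H\times_K H$, use that the projections are homomorphisms to identify $\mathcal{G}((\phi_1,\phi_2))$) and is complete. The existence sketch, however, has a genuine gap that you yourself flag but do not close: Chevalley's sequence $0\to L\to G\to A\to 0$ need not split, a pointed variety morphism $G\to H$ does not obviously induce anything usable between the abelian quotients, and your two rigidity inputs (semi-abelian varieties, vector groups) do not cover groups such as the universal vector extension of an abelian variety. So ``combining these consistently across the various components'' is precisely the missing content, not a routine verification. The paper's resolution is to replace the Chevalley decomposition by Rosenlicht's: the addition map $G_{\aff}\times_K G_{\ant}\to G$ is surjective with kernel $G_{\aff}\cap G_{\ant}$, and the decisive rigidity input is that every pointed variety morphism from the \emph{antiaffine} part $G_{\ant}$ to any commutative group is automatically a homomorphism (Lemma 1.5(i) of Brion). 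Antiaffine groups may contain nontrivial unipotent subgroups, which is exactly what absorbs the non-split cases your inputs cannot reach; one then only has to treat $(G_{\aff})_s$ (where $\phi$ is forced into $(H_{\aff})_s$ and is automatically a homomorphism) and $(G_{\aff})_u$ (take the differential at the origin), and check that the three pieces agree on $G_{\aff}\cap G_{\ant}$.

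Your uniqueness argument is also not viable as stated. For an arbitrary functor $\mathcal{G}_1$ with $\mathcal{G}_1\circ\mathcal{F}=\id_{\mathfrak{C}}$ there is no a priori relation between $\mathcal{G}_1(\phi)$ and the differential or formal completion of $\phi$ at the origin; establishing such a relation is essentially equivalent to the uniqueness statement, so invoking ``the explicit form of Kahn's construction'' to control $\mathcal{H}$ is circular. The correct argument first uses the surjective homomorphism $G_{\ant}\times_K(G_{\aff})_s\times_K(G_{\aff})_u\to G$ together with additivity and the fact that precomposition with a surjection is injective on morphisms, to reduce uniqueness to the three factors; on $G_{\ant}$ and $(G_{\aff})_s$ the value of any such functor is forced by rigidity, and on the unipotent factor one reduces (via inclusions, projections, and additivity) to the monomial maps $X\mapsto cX^n$ on $\mathbb{G}_{a,K}$ and exploits exactly the homogeneity relation you wrote down: from $\phi\circ m_2=m_{2^n}\circ\phi$ one gets $2\lambda=2^n\lambda$ for $\mathcal{G}_1(\phi)=m_\lambda$, hence $\lambda=0$ when $n\geq 2$. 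The germ of the right idea is therefore already in your identity $\mathcal{H}(\phi\circ[n]_G)=\mathcal{H}([n]_H\circ\phi)$; what is missing is the structural reduction to $\mathbb{G}_{a,K}$ and the observation that applying this identity to a single non-additive monomial already forces $\mathcal{H}=0$, with no need for formal-group considerations.
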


If $G$ has no non-trivial unipotent group as a direct factor, we will give an explicit description of the sets of all (not necessarily pointed) morphisms and isomorphisms of algebraic varieties between $G$ and $H$ (see Theorem \ref{thm:explicit} below). We will then use this description to classify all connected commutative algebraic groups $G$ over $K$ such that every automorphism of the pointed algebraic variety $G$, the neutral element being the distinguished point, is an automorphism of algebraic groups (see Theorem \ref{thm:characterization} below).

The idea of the proof of Theorem \ref{thm:main} is the following: to a connected commutative algebraic group over $K$, we can associate its antiaffine part, its toric part, and its unipotent part. A morphism of pointed algebraic varieties between two connected commutative algebraic groups then naturally induces homomorphisms between the respective antiaffine and toric parts as well as a morphism between the unipotent parts. Using the canonical isomorphism between a commutative unipotent algebraic group and the algebraic group associated to its Lie algebra in characteristic $0$, we replace this last morphism by its differential at the neutral element. We then check that these homomorphisms patch together to give a homomorphism on the whole domain and that the construction commutes with the composition of morphisms so that we obtain a functor $\mathcal{G}$ with the desired property. Finally, we show that $\mathcal{G}$ is unique with this property and that it automatically commutes with the addition of morphisms.

The notation introduced in the following well-known proposition coincides with the one used in \cite{Brion09} and will be used throughout this article:

\begin{prop}[Chapter 8 of \cite{MilneAG}, in particular Proposition 8.1, Example 8.4, Theorem 8.27, and Propositions 8.36 and 8.37]\label{prop:summary}
Let $G$ be a connected algebraic group over a field $K$ of characteristic $0$. Then $G$ contains a smallest normal algebraic $K$-subgroup $H$ such that $G/H$ is an abelian variety. This $H$ is connected and affine and we denote it by $G_{\aff}$. Furthermore, $G$ contains a smallest normal algebraic $K$-subgroup $H'$ such that $G/H'$ is affine. This $H'$ is connected and antiaffine, i.e., $\mathcal{O}(H') = K$, and we denote it by $G_{\ant}$.
\end{prop}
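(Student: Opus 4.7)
The plan is to derive both parts from Chevalley's structure theorem, which over a perfect field (in particular our characteristic $0$ field $K$) furnishes a unique connected normal affine $K$-subgroup $N$ of $G$ such that $G/N$ is an abelian variety. I would take $G_{\aff} := N$. For the minimality assertion, I would invoke the fact that every morphism of algebraic groups from a connected affine algebraic group to an abelian variety is trivial: after base change to $\overline{K}$, the image is a connected affine subvariety of a projective variety, hence a point. Applied to the composite $N \hookrightarrow G \twoheadrightarrow G/H$ for any other normal $H$ with $G/H$ an abelian variety, this yields $N \subseteq H$. (Replacing $H$ by its identity component, which also has abelian-variety quotient since $G$ is connected, shows moreover that we may take $H$ connected if we wish.)

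For the antiaffine part, I would consider the set $\mathcal{A}$ of all connected antiaffine algebraic $K$-subgroups of $G$ and show it is closed under the formation of subgroup products: if $A_1, A_2 \in \mathcal{A}$, then $\mathcal{O}(A_1 \times A_2) = \mathcal{O}(A_1) \otimes_K \mathcal{O}(A_2) = K$, so any regular function on the image of the multiplication map $A_1 \times A_2 \to G$ pulls back to an element of $K$; that image is therefore antiaffine. A dimension argument then produces a unique maximal element $H' \in \mathcal{A}$, which I would declare to be $G_{\ant}$. Normality follows because conjugation by any $g \in G(\overline{K})$ carries $H'_{\overline{K}}$ to another connected antiaffine subgroup of $G_{\overline{K}}$, which by maximality lies in $H'_{\overline{K}}$; Galois descent yields normality over $K$.

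Finally, to show $G/H'$ is affine and that $H'$ is smallest with this property, I would invoke the affinization morphism $\alpha : G \to \Spec \mathcal{O}(G)$. Its scheme-theoretic image is an affine algebraic group, and the identity component of $\ker \alpha$ is antiaffine (the restriction of any global function on $G$ to it is constant by construction), hence contained in $H'$; conversely $H' \subseteq \ker \alpha$ because $H'$ is antiaffine, so the two coincide and $G/H'$ embeds into $\Spec \mathcal{O}(G)$. Smallestness is then easy: if $G/H''$ is affine, the image of $H'$ in $G/H''$ is simultaneously antiaffine (as a quotient of $H'$) and affine (as a subgroup of an affine group), hence trivial, whence $H' \subseteq H''$. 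I expect the most delicate step to be the identification of $H'$ with the identity component of $\ker \alpha$ and, correlatively, the verification that $G/H'$ is an algebraic group of finite type rather than a merely affine scheme; this is where the full force of the structure theory enters and is ultimately what makes the proposition substantive.
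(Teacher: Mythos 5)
The paper offers no proof of this proposition: it is stated as a summary of standard results and attributed directly to Chapter 8 of Milne's book, so the only comparison available is with the standard arguments you are implicitly reconstructing. Your treatment of $G_{\aff}$ is correct and is exactly the usual route: Chevalley--Barsotti gives the connected normal affine $N$ with $G/N$ an abelian variety, and minimality among \emph{all} normal $H$ with abelian quotient follows because a connected affine group has no non-trivial homomorphism to an abelian variety. No complaints there.

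The antiaffine half has a genuine gap, and it sits precisely where you flag it. First, a repairable issue: the proposition concerns a general connected $G$, not a commutative one, so the image of the multiplication map $A_1 \times A_2 \to G$ need not be a subgroup; you should instead take the subgroup generated by $A_1$ and $A_2$, which is the image of an iterated multiplication map $(A_1 \times A_2)^n \to G$ for $n$ large, and the same K\"unneth argument shows it is antiaffine. (This closure property is also what your normality argument secretly needs, since ``maximal'' must mean ``maximum'' for $gH'g^{-1} \subseteq H'$ to follow.) The unrepaired gap is the affineness of $G/H'$. Your parenthetical argument that $(\ker\alpha)^\circ$ is antiaffine --- ``the restriction of any global function on $G$ to it is constant by construction'' --- establishes only that functions \emph{extending to} $G$ restrict to constants on $\ker\alpha$; a closed subvariety of $G$ generally carries many regular functions that do not extend, so this does not show $\mathcal{O}\bigl((\ker\alpha)^\circ\bigr) = K$. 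Without that identification you cannot conclude $G/H'$ embeds into $\Spec \mathcal{O}(G)$, and you are separately assuming that $\mathcal{O}(G)$ is finitely generated so that $\Spec \mathcal{O}(G)$ is an affine algebraic group and $\alpha$ realizes $G/\ker\alpha$ as a closed subgroup of it. These two facts --- finite generation of $\mathcal{O}(G)$ and antiaffineness of the connected kernel of the affinization morphism --- are exactly the content of the results cited in the proposition's bracket (Milne, Theorem 8.27 and its companions, going back to Demazure--Gabriel), and they are the substantive part of the statement; your final ``smallestness'' deduction from them is fine, but as written the proof assumes what it needs to prove at the decisive step.
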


We now prove Theorem \ref{thm:main}.

\begin{proof}[Proof of Theorem \ref{thm:main}]
Let $G$ and $H$ be connected commutative algebraic groups over $K$ and let $0_G$ and $0_H$ denote the respective neutral elements of $G$ and $H$. Let $\phi: G \to H$ be a morphism of algebraic varieties such that $\phi(0_G) = 0_H$.

In order to prove the theorem, we have to construct a homomorphism of algebraic groups $\mathcal{G}(\phi): G \to H$. Furthermore, this construction should commute with composition and if $\phi$ is already a homomorphism of algebraic groups, we should have $\mathcal{G}(\phi) = \phi$. Lastly, we want to show that this $\mathcal{G}$ is unique and commutes with addition.

By Proposition 3.1(i) in \cite{Brion09} (a scheme-theoretic variant of a structure theorem due to Rosenlicht in \cite{Rosenlicht}), the addition homomorphism induces an isomorphism $(G_{\aff} \times_K G_{\ant})/(G_{\aff} \cap G_{\ant}) \simeq G$ and the same for $H$. By Lemma 1.5(i) in \cite{Brion09}, $\phi$ induces a homomorphism of algebraic groups $G_{\ant} \to H_{\ant}$.

Consider now the morphism $G_{\aff} \to H/H_{\aff}$ induced by $\phi$. By Corollary 3.6 in \cite{Milne}, this morphism is a homomorphism of algebraic groups since $H/H_{\aff}$ is an abelian variety and $G_{\aff}$ is connected. By Proposition 8.1 in \cite{MilneAG}, its image is a connected affine algebraic group. But since $H/H_{\aff}$ is an abelian variety, this image must be trivial by Example 8.4 in \cite{MilneAG}, so $\phi$ induces a morphism of algebraic varieties $G_{\aff} \to H_{\aff}$.

By Theorem 16.13 in \cite{MilneAG}, we have a decomposition $G_{\aff} \simeq (G_{\aff})_s \times_K (G_{\aff})_u$ into a part of multiplicative type $(G_{\aff})_s \subset G_{\aff}$ and a unipotent part $(G_{\aff})_u \subset G_{\aff}$ and similarly for $H_{\aff}$. Since $G_{\aff}$ and $H_{\aff}$ are connected, so are $(G_{\aff})_s$ and $(H_{\aff})_s$. We identify $G_{\aff}$ with $(G_{\aff})_s \times_K (G_{\aff})_u$.

By Proposition 12.49 in \cite{MilneAG}, the composition $G_{\aff} \to H_{\aff} \to (H_{\aff})_s$ of the restriction of $\phi$ with the canonical projection is a homomorphism of algebraic groups. This homomorphism then has to factor through $(G_{\aff})_s$ by Corollary 14.18(a) in \cite{MilneAG}. So $\phi$ induces a homomorphism of algebraic groups $(G_{\aff})_s \to (H_{\aff})_s$ as well as a morphism of algebraic varieties $(G_{\aff})_u \to (H_{\aff})_u$.

We now construct a homomorphism $\psi: G_{\aff} \times_K G_{\ant} = (G_{\aff})_s \times_K (G_{\aff})_u \times_K G_{\ant} \to H$. Equivalently, we construct three homomorphisms from $(G_{\aff})_s$, $(G_{\aff})_u$, and $G_{\ant}$ to $H$ respectively: On $G_{\ant}$, we take the restriction of $\phi$. On $(G_{\aff})_s$, we take the induced homomorphism of algebraic groups to $(H_{\aff})_s$ from above. On $(G_{\aff})_u$, we take the composition $(G_{\aff})_u \to \Lie (G_{\aff})_u \to \Lie (H_{\aff})_u \to (H_{\aff})_u$, where the morphism in the middle is the differential at $0_G$ of the restriction of $\phi$ and the two other morphisms are the canonical isomorphisms between a commutative unipotent algebraic group over $K$ and the algebraic group associated to its Lie algebra (see Proposition 14.32 in \cite{MilneAG}).

We will now show that the following claim holds: if $G' \subset G_{\aff}$ is any algebraic subgroup such that $\phi|_{G'}: G' \to H$ is a homomorphism, then $\psi|_{G' \times_K \{0_G\}} = \phi|_{G'}$ (if we identify $G'$ and $G' \times_K \{0_G\}$). To prove this, let $G'_s$ and $G'_u$ denote the part of multiplicative type and the unipotent part of $G'$ respectively, then it suffices to show that $\psi|_{G'_s \times_K \{0_G\}} = \phi|_{G'_s}$ and $\psi|_{G'_u \times_K \{0_G\}} = \phi|_{G'_u}$. 

By Corollary 14.18(b) in \cite{MilneAG}, we have $\phi(G'_s) \subset (H_{\aff})_s$ and so $\psi|_{G'_s \times_K \{0_G\}} = \phi|_{G'_s}$ by the construction of $\psi$. Thanks to the commutative diagram before Proposition 14.32 in \cite{MilneAG}, the homomorphism $\psi|_{G'_u \times_K \{0_G\}}$ is by construction equal to the composition $G'_u \to \Lie G'_u \to \Lie (H_{\aff})_u \to (H_{\aff})_u$, where the morphism in the middle is the differential at $0_G$ of $\phi|_{G'_u}$, the two other morphisms are the canonical isomorphisms between a commutative unipotent algebraic group over $K$ and the algebraic group associated to its Lie algebra, and we identify $G'_u$ with $G'_u \times_K \{0_G\}$. It now follows from the commutative diagram before Proposition 14.32 in \cite{MilneAG} that $\psi|_{G'_u \times_K \{0_G\}} = \phi|_{G'_u}$. Thus, the above claim holds.

In particular, since $\phi|_{G'}$ is a homomorphism for $G' = G_{\aff} \cap G_{\ant}$, the claim implies that $\psi: G_{\aff} \times_K G_{\ant} \to H$ induces a homomorphism $\chi: G \to H$. We even get that $\chi = \phi$ if $\phi$ is already a homomorphism of algebraic groups. It is also clear that the construction commutes with composition, so we set $\mathcal{G}(\phi) = \chi$.

It remains to prove that there exists only one functor $\mathcal{G}: \mathfrak{D} \to \mathfrak{C}$ such that $\mathcal{G} \circ \mathcal{F} = \id_{\mathfrak{C}}$ and that this $\mathcal{G}$ commutes with addition. Let therefore $\mathcal{G}$ denote an arbitrary functor from $\mathfrak{D}$ to $\mathfrak{C}$ such that $\mathcal{G} \circ \mathcal{F} = \id_{\mathfrak{C}}$. The objects of $\mathfrak{C}$ and $\mathfrak{D}$ are the same and $\mathcal{F}$ fixes each object, so $\mathcal{G}$ must also fix each object.

Furthermore, if $\phi_i: G \to H$ ($i = 1,2$) are two morphisms of $\mathfrak{D}$, then it follows that $(\phi_1,\phi_2): G \to H \times_K H$ is a morphism of $\mathfrak{D}$. If $\alpha: H \times_K H \to H$ denotes the addition morphism, we deduce that $\mathcal{G}(\phi_1+\phi_2) = \mathcal{G}(\alpha) \circ \mathcal{G}((\phi_1,\phi_2)) = \alpha \circ \mathcal{G}((\phi_1,\phi_2))$ since $\alpha$ is a homomorphism of algebraic groups and $\mathcal{G} \circ \mathcal{F} = \id_{\mathfrak{C}}$. Let $p_i: H \times_K H \to H$ ($i=1,2$) denote the two canonical projections, which are also homomorphisms of algebraic groups. We compute that $p_i \circ \mathcal{G}((\phi_1,\phi_2)) = \mathcal{G}(p_i) \circ \mathcal{G}((\phi_1,\phi_2)) = \mathcal{G}(p_i \circ (\phi_1,\phi_2)) = \mathcal{G}(\phi_i)$ ($i=1,2$). We deduce that $\mathcal{G}((\phi_1,\phi_2)) = (\mathcal{G}(\phi_1),\mathcal{G}(\phi_2))$. Putting everything together, we deduce that $\mathcal{G}(\phi_1 + \phi_2) = \mathcal{G}(\phi_1) + \mathcal{G}(\phi_2)$, so $\mathcal{G}$ commutes with addition.

Let now $\phi: G \to H$ denote an arbitrary morphism of $\mathfrak{D}$ and let $(G_{\aff})_s$, $(G_{\aff})_u$, etc. be as above. The addition homomorphism of algebraic groups $\sigma: G_{\ant} \times_K (G_{\aff})_s \times_K (G_{\aff})_u \to G$ is surjective. Since $\mathcal{G}(\phi \circ \sigma) = \mathcal{G}(\phi) \circ \sigma$, $\mathcal{G}(\phi)$ is uniquely determined by $\mathcal{G}(\phi \circ \sigma)$ thanks to Theorem 5.13 in \cite{MilneAG}. For $H \in \{G_{\ant}, (G_{\aff})_s, (G_{\aff})_u\}$, let $i_H: H \to G_{\ant} \times_K (G_{\aff})_s \times_K (G_{\aff})_u$ and $\mathrm{pr}_H: G_{\ant} \times_K (G_{\aff})_s \times_K (G_{\aff})_u \to H$ denote the canonical inclusion and projection homomorphisms respectively. Since $\mathcal{G}(\phi \circ \sigma)$ is a homomorphism of algebraic groups, we have
\begin{align*}
\mathcal{G}(\phi \circ \sigma) =  \sum_{H \in \{G_{\ant}, (G_{\aff})_s, (G_{\aff})_u\}} \mathcal{G}(\phi \circ \sigma) \circ i_H \circ \mathrm{pr}_H\\
= \sum_{H \in \{G_{\ant}, (G_{\aff})_s, (G_{\aff})_u\}}{\mathcal{G}(\phi|_H) \circ \mathrm{pr}_H}.
\end{align*}
Thus, it suffices to show that $\mathcal{G}\left(\phi|_{G_{\ant}}\right)$, $\mathcal{G}\left(\phi|_{(G_{\aff})_s}\right)$, and $\mathcal{G}\left(\phi|_{(G_{\aff})_u}\right)$ are uniquely determined. We have seen above that $\phi|_{G_{\ant}}$ is a homomorphism of algebraic groups and so $\mathcal{G}\left(\phi|_{G_{\ant}}\right) = \phi|_{G_{\ant}}$.

We have also seen above that $\phi\left((G_{\aff})_s\right) \subset H_{\aff}$. Let $\tilde{\phi}: (G_{\aff})_s \to H_{\aff}$ denote the induced morphism. Since the inclusion morphism $H_{\aff} \hookrightarrow H$ is a homomorphism of algebraic groups, it follows that $\mathcal{G}\left(\phi|_{(G_{\aff})_s}\right)$ is uniquely determined by $\mathcal{G}(\tilde{\phi})$. We then deduce from the definition of $(H_{\aff})_s$ in Theorem 16.13 in \cite{MilneAG} that the image of $\mathcal{G}(\tilde{\phi})$ is contained in $(H_{\aff})_s$. Let $\iota: (H_{\aff})_s \to H_{\aff}$ and $\pi: H_{\aff} \to (H_{\aff})_s$ denote the canonical inclusion and projection homomorphisms respectively. By Proposition 12.49 in \cite{MilneAG}, $\pi \circ \tilde{\phi}$ is a homomorphism of algebraic groups. We deduce that
\[ \mathcal{G}(\tilde{\phi}) = \iota \circ \pi \circ \mathcal{G}(\tilde{\phi}) = \iota \circ \mathcal{G}(\pi \circ \tilde{\phi}) = \iota \circ \pi \circ \tilde{\phi}\]
is uniquely determined. By the above, $\mathcal{G}\left(\phi|_{(G_{\aff})_s}\right)$ is uniquely determined as well.

It remains to show that $\mathcal{G}\left(\phi|_{(G_{\aff})_u}\right)$ is uniquely determined. We have seen above that $\phi\left((G_{\aff})_u\right) \subset (H_{\aff})_u$. Since the inclusion morphism $(H_{\aff})_u \hookrightarrow H$ is a homomorphism of algebraic groups, $\mathcal{G}\left(\phi|_{(G_{\aff})_u}\right)$ does not depend on whether we regard $\phi|_{(G_{\aff})_u}$ as a morphism with target $H_{\aff}$ or as a morphism with target $(H_{\aff})_u \subset H_{\aff}$. We may therefore assume that $G = (G_{\aff})_u$ and $H = (H_{\aff})_u$ and have to show that $\mathcal{G}(\phi)$ is uniquely determined. By Proposition 14.32 in \cite{MilneAG}, both $G$ and $H$ are isomorphic to some power of the additive group $\mathbb{G}_{a,K}$. Using the fact that $\mathcal{G}$ commutes with composition and fixes homomorphisms of algebraic groups, pre-composing with the inclusion from some $\mathbb{G}_{a,K}$, and post-composing with the projection on another $\mathbb{G}_{a,K}$, we may therefore assume that $G = H = \mathbb{G}_{a,K} = \mathrm{Spec} K[X]$. Using that $\mathcal{G}$ commutes with addition, we may furthermore assume that $\phi$ is induced by the ring homomorphism $K[X] \to K[X]$ that maps $X$ to $c X^n$ for some $c \in K$ and some $n \in \mathbb{N} = \{1,2,\hdots\}$. Since the morphism induced by $X \mapsto c X$ is an endomorphism of algebraic groups and $\mathcal{G}$ commutes with composition and fixes homomorphisms of algebraic groups, we may even assume that $c = 1$.

We may also assume that $n \geq 2$ since $\mathcal{G}(\id_{\mathbb{G}_{a,K}}) = \id_{\mathbb{G}_{a,K}}$. We then want to show that $\mathcal{G}(\phi)$ is trivial. For $\lambda \in K$, let $m_{\lambda}: \mathbb{G}_{a,K} \to \mathbb{G}_{a,K}$ denote the multiplication-by-$\lambda$ homomorphism induced by the ring homomorphism $K[X] \to K[X]$ that maps $X$ to $\lambda X$. We have $\phi \circ m_2 = m_{2^n} \circ \phi$ and so
\[ \mathcal{G}(\phi) \circ m_2 = m_{2^n} \circ \mathcal{G}(\phi).\]
But $\mathcal{G}(\phi)$ is a homomorphism of algebraic groups, so $\mathcal{G}(\phi) = m_{\lambda}$ for some $\lambda \in K$. It follows that $2\lambda = 2^n\lambda$. Since $n \geq 2$, we deduce that $\lambda = 0$ and so $\mathcal{G}(\phi)$ is trivial as desired. This completes the proof of the theorem.
\end{proof}

We get the following corollary:

\begin{cor}\label{cor:main}
Let $K$ be a field of characteristic $0$ and let $G$ and $H$ be two connected commutative algebraic groups over $K$. If $G$ and $H$ are isomorphic as algebraic varieties over $K$, then they are isomorphic as algebraic groups.
\end{cor}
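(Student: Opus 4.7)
The plan is to derive this as an almost immediate consequence of Theorem \ref{thm:main}. First I would take any isomorphism of algebraic varieties $f: G \to H$. A priori $f$ need not send $0_G$ to $0_H$, so I would compose with the translation $\tau: H \to H$, $h \mapsto h - f(0_G)$, which is itself an isomorphism of algebraic varieties. The composition $\tilde{f} := \tau \circ f : G \to H$ is then an isomorphism of algebraic varieties satisfying $\tilde{f}(0_G) = 0_H$, i.e., a morphism in $\mathfrak{D}$, whose set-theoretic inverse $\tilde{f}^{-1}$ also lies in $\mathfrak{D}$.

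Next I would apply the functor $\mathcal{G}$ from Theorem \ref{thm:main} to both $\tilde{f}$ and $\tilde{f}^{-1}$ in order to produce homomorphisms of algebraic groups $\mathcal{G}(\tilde{f}): G \to H$ and $\mathcal{G}(\tilde{f}^{-1}): H \to G$. Functoriality of $\mathcal{G}$ together with the identity $\mathcal{G} \circ \mathcal{F} = \id_{\mathfrak{C}}$ (so that $\mathcal{G}$ fixes the identity homomorphisms $\id_G$ and $\id_H$, which lie in $\mathfrak{C}$) yields
\[ \mathcal{G}(\tilde{f}^{-1}) \circ \mathcal{G}(\tilde{f}) = \mathcal{G}(\tilde{f}^{-1} \circ \tilde{f}) = \mathcal{G}(\id_G) = \id_G, \]
and symmetrically on the other side. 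Hence $\mathcal{G}(\tilde{f})$ is an isomorphism of algebraic groups between $G$ and $H$, which is exactly what the corollary asserts.

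There is essentially no obstacle here: all the substance has been packaged into Theorem \ref{thm:main}. The only minor subtlety is the normalization by a translation to ensure the variety isomorphism respects the distinguished points, so that we actually have morphisms in $\mathfrak{D}$ to which $\mathcal{G}$ can be applied; this step is harmless since translations are variety automorphisms. I would present the argument in just a few lines.
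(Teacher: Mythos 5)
Your proposal is correct and matches the paper's proof: the paper likewise normalizes by a translation so that the isomorphism respects the neutral elements and then invokes Theorem \ref{thm:main}, with the fact that functors preserve isomorphisms (which you spell out via $\mathcal{G}(\tilde{f}^{-1}) \circ \mathcal{G}(\tilde{f}) = \mathcal{G}(\id_G) = \id_G$) left implicit. No gaps.
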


The question to what extent the variety structure does or does not determine the group structure of an algebraic group has also been studied recently by Popov in \cite{Popov}. Corollary \ref{cor:main} says that the group structure is completely determined by the variety structure if the algebraic group is connected, the ground field is of characteristic $0$, and we additionally impose \emph{a priori} that the algebraic group is commutative.

The commutativity hypothesis is crucial here: for example, every unipotent algebraic group over a field $K$ of characteristic $0$ is isomorphic as a variety to $\mathbb{A}^n_{K}$ for some non-negative integer $n$ by Proposition 14.32 in \cite{MilneAG}, but there exist non-commutative unipotent algebraic groups over $K$, which are therefore not isomorphic to $\mathbb{G}^n_{a,K}$ as algebraic groups for every non-negative integer $n$.

\begin{proof}[Proof of Corollary \ref{cor:main}]
Let $\phi: G \to H$ be an isomorphism of algebraic varieties and let $0_G$ and $0_H$ denote the respective neutral elements of $G$ and $H$. After composing with a translation, we can assume that $\phi(0_G) = 0_H$. Theorem \ref{thm:main} then furnishes an isomorphism of algebraic groups $\mathcal{G}(\phi): G \to H$.
\end{proof}

\begin{rmk}
If $K = \mathbb{C}$ and we weaken the hypotheses and only demand that the analytifications of $G$ and $H$ are isomorphic as complex-analytic spaces, then Corollary \ref{cor:main} is false, as pointed out to us by M. Hindry. For a concrete example, pointed out to us by Z. Gao, consider the variety $U$ over $\mathbb{C}$ from Section 7 of \cite{Neeman}; this variety $U$ is a non-trivial principal $\mathbb{A}^1_{\mathbb{C}}$-bundle over an elliptic curve $E = C$ over $\mathbb{C}$. By Theorem 7 on p. 185 of \cite{SerreBook}, the variety $U$ admits the structure of a connected commutative algebraic group over $\mathbb{C}$ that is an extension of $E$ by $\mathbb{G}_{a,\mathbb{C}}$. In Section 7 of \cite{Neeman}, it is shown that the analytifications of $U$ and $\mathbb{G}_{m,\mathbb{C}}^2$ are isomorphic and this fact is attributed to Serre. On the other hand, $U$ admits a surjective homomorphism of algebraic groups to $E$, so $U$ is not affine and hence cannot be isomorphic to $\mathbb{G}_{m,\mathbb{C}}^2$ as an algebraic group.

Corollary \ref{cor:main} is also false in positive characteristic as the well-known example $G = H = \mathbb{A}^2_{\mathbb{F}_2}$ shows, with both neutral elements equal to $(0,0)$ and the group laws given by the polynomials $x_1+x_2, y_1+y_2$ and $x_1+x_2, y_1+y_2+x_1x_2$ in the coordinates $x_1,y_1,x_2,y_2$ on $\mathbb{A}^4_{\mathbb{F}_2}$ respectively; there cannot exist an isomorphism of algebraic groups $G \to H$ since the kernels of multiplication by $2$ do not have the same dimension. More generally, if $p$ is an arbitrary prime, the additive group of Witt vectors of length $2$ over $\mathbb{F}_p$ is isomorphic to $\mathbb{G}_{a,\mathbb{F}_p}^2$ as a variety, but not as an algebraic group (one is $p$-torsion, the other is not).

The construction in the proof of Theorem \ref{thm:main} very much depends on the choice of base point and we now show that it does not extend to torsors. Recall that a torsor $T$ under a connected commutative algebraic group $G$ over a field $K$ of characteristic $0$ is a non-empty $K$-variety, i.e., a reduced and separated $K$-scheme of finite type, with an action $\alpha: T \times_K G \to T$ of $G$ such that the morphism $(\mathrm{pr}_{T},\alpha): T \times_K G \to T \times_K T$ is an isomorphism, where $\mathrm{pr}_{T}: T \times_K G \to T$ denotes the canonical projection to $T$.

\begin{prop}
Let $K$ be a field of characteristic $0$ and let $\mathfrak{C}'$ and $\mathfrak{D}'$ denote the categories defined as follows: they both have as objects pairs $(G,T)$ of a torsor $T$ under a connected commutative algebraic group $G$ over $K$. Given two such pairs $(G,T)$ and $(G',T')$, the morphisms from $(G,T)$ to $(G',T')$ in $\mathfrak{C}'$ are all pairs $(\phi, \psi)$ of a homomorphism of algebraic groups $\phi: G \to G'$ and a $\phi$-equivariant morphism $\psi: T \to T'$ while the morphisms from $(G,T)$ to $(G',T')$ in $\mathfrak{D}'$ are all morphisms of algebraic varieties from $T$ to $T'$. Let $\mathfrak{C}$, $\mathfrak{D}$, and $\mathcal{F}$ be as defined before Theorem \ref{thm:main}.

Then $\mathfrak{C}$ can be canonically identified with a subcategory of $\mathfrak{C}'$ and $\mathfrak{D}$ can be canonically identified with a subcategory of $\mathfrak{D}'$, the functor $\mathcal{F}$ canonically extends to a functor $\mathcal{F}': \mathfrak{C}' \to \mathfrak{D}'$, but the functor $\mathcal{G}$ from Theorem \ref{thm:main} cannot be extended to a functor $\mathcal{G}': \mathfrak{D}' \to \mathfrak{C}'$ that satisfies $\mathcal{G}' \circ \mathcal{F}' = \id_{\mathfrak{C}'}$.
\end{prop}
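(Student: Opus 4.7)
The first three claims in the proposition are essentially formal. I would identify $\mathfrak{C}$ with the subcategory of $\mathfrak{C}'$ obtained by sending a connected commutative algebraic group $G$ to the pair $(G,G)$ with $G$ acting on itself by addition (the trivial torsor), and by sending a group homomorphism $\phi\colon G \to G'$ to the pair $(\phi,\phi)$ (the map $\phi$ is manifestly $\phi$-equivariant). Identify $\mathfrak{D}$ with the subcategory of $\mathfrak{D}'$ by the same assignment on objects, viewing a pointed variety morphism as an arbitrary variety morphism between trivial torsors. Define $\mathcal{F}'\colon \mathfrak{C}' \to \mathfrak{D}'$ by $(G,T)\mapsto (G,T)$ on objects and $(\phi,\psi)\mapsto \psi$ on morphisms; it visibly extends $\mathcal{F}$ and is clearly functorial.

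For the main content, I would argue by contradiction over $G = \mathbb{G}_{a,K}$ viewed as the trivial torsor under itself. A $\phi$-equivariant morphism $\psi\colon G\to G$ on a trivial torsor satisfies $\psi(x) = \psi(0)+\phi(x)$, so $\Hom_{\mathfrak{C}'}((G,G),(G,G))$ consists precisely of the pairs $(m_\lambda, t_c\circ m_\lambda)$ with $\lambda,c\in K$ (where $m_\lambda$ denotes scalar multiplication, $t_c$ denotes translation, and we use that group endomorphisms of $\mathbb{G}_{a,K}$ in characteristic zero are just scalar multiplications). The hypothesis $\mathcal{G}'\circ\mathcal{F}'=\id_{\mathfrak{C}'}$ immediately forces $\mathcal{G}'(t_a) = (\id_G, t_a)$ and $\mathcal{G}'(m_\lambda) = (m_\lambda,m_\lambda)$. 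For the squaring map $f(x)=x^2$, the relation $f\circ m_2 = m_4\circ f$ combined with functoriality forces the parameters $(\mu,c)$ of $\mathcal{G}'(f)=(m_\mu, t_c\circ m_\mu)$ to satisfy $2\mu=4\mu$ and $c=4c$, so $\mathcal{G}'(f)=(0,0)$. The analogous argument applied to the multiplication morphism $h_1\colon \mathbb{G}_{a,K}^2\to\mathbb{G}_{a,K}$, $(u,v)\mapsto uv$, with the scaling $(u,v)\mapsto(2u,2v)$, yields $\mathcal{G}'(h_1)=(0,0)$ as well.

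Fix any $a\in K\setminus\{0\}$ and consider $g(x) = x^2 + 2ax$. Factoring $g = t_{-a^2}\circ f\circ t_a$ and applying $\mathcal{G}'$ gives $(\id_G, t_{-a^2})\circ(0,0)\circ(\id_G, t_a) = (0,\, x\mapsto -a^2)$. Factoring instead $g = h_1\circ h_2$ with $h_2\colon x\mapsto (x, x+2a)$, and first computing $\mathcal{G}'(h_2) = (x\mapsto(x,x),\, x\mapsto(x,x+2a))$ by post-composing with the two canonical projections (each a group homomorphism, hence fixed by $\mathcal{G}'$), gives $\mathcal{G}'(g) = (0,0)\circ\mathcal{G}'(h_2) = (0,\, x\mapsto 0)$. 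These two values for $\mathcal{G}'(g)$ disagree when $a\neq 0$, contradicting the functoriality of $\mathcal{G}'$. The main obstacle I would expect is that in the torsor setting there is no addition map $T'\times T'\to T'$, so the trick of deriving automatic additivity from functoriality used at the end of the proof of Theorem~\ref{thm:main} is unavailable; the obstruction must therefore be engineered purely through composition, which makes the choice of a polynomial with two inequivalent composition factorizations the crucial step.
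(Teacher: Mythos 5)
Your argument is correct, and it takes a genuinely different (in fact stronger) route than the paper. Both proofs hinge on the same polynomial $X^2+2aX=(X+a)^2-a^2$ and its factorization through translations, but the paper evaluates $\mathcal{G}'$ on the pointed maps $\psi_a$ by invoking the \emph{extension} hypothesis together with the explicit construction of $\mathcal{G}$ (which sends a pointed morphism between vector groups to its differential at the origin), obtaining $\mathcal{G}'(\psi_a)=(m_{2a},m_{2a})$ and hence the contradiction $m_2=m_0$ in the group-homomorphism component. You never use that $\mathcal{G}'$ extends $\mathcal{G}$: you pin down $\mathcal{G}'$ on $x\mapsto x^2$ and on $(u,v)\mapsto uv$ purely by the scaling-equivariance trick (the same device the paper uses in the uniqueness part of Theorem \ref{thm:main}), and you manufacture the contradiction from two inequivalent composition factorizations of $x^2+2ax$, landing in the torsor component (constant $-a^2$ versus constant $0$). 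The price is an auxiliary object $(\mathbb{G}_{a,K}^2,\mathbb{G}_{a,K}^2)$ and the multiplication morphism; the payoff is that you prove the stronger assertion that \emph{no} functor $\mathcal{G}'\colon\mathfrak{D}'\to\mathfrak{C}'$ with $\mathcal{G}'\circ\mathcal{F}'=\id_{\mathfrak{C}'}$ exists, whether or not it extends $\mathcal{G}$, whereas the paper only rules out extensions of the specific $\mathcal{G}$. All the individual steps check out: the classification of $\Hom_{\mathfrak{C}'}((G,G),(G,G))$ as pairs $(m_\lambda,t_c\circ m_\lambda)$ follows from evaluating equivariance at the base point $0$, the relations $f\circ m_2=m_4\circ f$ and $h_1\circ s_2=m_4\circ h_1$ force both components to vanish, and $\mathcal{G}'(h_2)$ is legitimately recovered from the two projections since these are homomorphisms fixed by $\mathcal{G}'$.
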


\begin{proof}
The canonical identifications of $\mathfrak{C}$ and $\mathfrak{D}$ with subcategories of $\mathfrak{C}'$ and $\mathfrak{D}'$ respectively are obtained by identifying $G$ with $(G,G)$, where $G$ is regarded as a trivial torsor under itself, and, in the case of $\mathfrak{C}$ and $\mathfrak{C}'$, identifying $\phi: G \to H$ with $(\phi,\phi)$. The functor $\mathcal{F}'$ fixes all objects and sends a morphism from $(G,T)$ to $(G',T')$ in $\mathfrak{C}'$ to the associated morphism from $T$ to $T'$.
	
Suppose that $\mathcal{G}': \mathfrak{D}' \to \mathfrak{C}'$ is a functor that extends $\mathcal{G}$ and satisfies $\mathcal{G}' \circ \mathcal{F}' = \id_{\mathfrak{C}'}$. For $a,b,c \in K$, let $\psi_a$ denote the morphism from $\mathbb{G}_{a,K} = \mathrm{Spec} K[X]$, considered as a trival torsor under itself, to itself that is induced by the polynomial $(X+a)^2-a^2$, let $\tau_b: \mathbb{G}_{a,K} \to \mathbb{G}_{a,K}$ denote translation by $b$, and let $m_c: \mathbb{G}_{a,K} \to \mathbb{G}_{a,K}$ denote the multiplication-by-$c$ homomorphism. We have $\psi_a = \tau_{-a^2} \circ \psi_0 \circ \tau_{a}$ and all translations are of course $\id_{\mathbb{G}_{a,K}}$-equivariant.

It follows that
\begin{equation}\label{eq:functoriality}
\mathcal{G}'(\psi_a) = (\id_{\mathbb{G}_{a,K}},\tau_{-a^2}) \circ \mathcal{G}'(\psi_0) \circ (\id_{\mathbb{G}_{a,K}},\tau_a)
\end{equation}
for all $a \in K$. At the same time, we have that $\mathcal{G}'(\psi_a) = (m_{2a},m_{2a})$ for all $a \in K$ since $\mathcal{G}'$ extends $\mathcal{G}$ (recall that, by construction, $\mathcal{G}$ sends a pointed morphism between powers of $\mathbb{G}_{a,K}$ to the homomorphism induced by its differential at the neutral element via the canonical isomorphisms between powers of $\mathbb{G}_{a,K}$ and their Lie algebras). Setting $a = 0,1$ in this equation and plugging the results into \eqref{eq:functoriality}, we deduce that $m_2 = m_0$, a contradiction. So no such $\mathcal{G}'$ exists.
\end{proof}

It would be interesting to know whether Corollary \ref{cor:main} could nevertheless be extended to torsors:
\end{rmk}

\begin{qstn}\label{qstn}
Let $K$ be a field of characteristic $0$. For $i = 1,2$, let $T_i$ be a torsor under a connected commutative algebraic group $G_i$ over $K$ and let $\alpha_i: T_i \times_K G_i \to T_i$ denote the corresponding $G_i$-action. Suppose that $T_1$ and $T_2$ are isomorphic as algebraic varieties. Must they then be isomorphic as torsors, i.e., must there exist an isomorphism of algebraic groups $\phi: G_1 \to G_2$ and an isomorphism of algebraic varieties $\psi: T_1 \to T_2$ such that $\alpha_2 \circ (\psi \times_K \phi) = \psi \circ \alpha_1$?
\end{qstn}

The following theorem and its proof were pointed out to us by B. Kahn as well as, in the semiabelian case, by one of the referees.

\begin{thm}\label{thm:partialanswer}
Question \ref{qstn} has a positive answer if
\begin{enumerate}
\item for some $i \in \{1,2\}$, every variety automorphism of the base change of $G_i$ to an algebraic closure of $K$ is a composition of an automorphism of algebraic groups with a translation, or
\item $G_1$ or $G_2$ is a vector group.
\end{enumerate}
\end{thm}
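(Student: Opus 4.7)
The plan is to split according to which hypothesis holds. In case \textbf{(2)}, I would exploit the triviality of all vector-group torsors over a field to reduce directly to Corollary \ref{cor:main}. In case \textbf{(1)}, I would trivialize both torsors over $\overline{K}$, decompose the resulting variety isomorphism into a translation composed with a group isomorphism using the hypothesis, and descend the group-isomorphism part to $K$ via Galois descent.

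For case \textbf{(2)}, assume without loss of generality that $G_1 \simeq \mathbb{G}_{a,K}^n$. Since $H^1(K,\mathbb{G}_{a,K}) = 0$, the torsor $T_1$ is trivial, so $T_1(K) \neq \emptyset$; the image of any $K$-point under $\psi$ is a $K$-point of $T_2$, so $T_2$ is also trivial as a $G_2$-torsor. Fixing trivializations $T_i \simeq G_i$ turns $\psi$ into a variety isomorphism $G_1 \to G_2$ over $K$, which by Corollary \ref{cor:main} implies that $G_1 \simeq G_2$ as algebraic groups; a torsor isomorphism then follows at once.

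For case \textbf{(1)}, symmetrically assume the hypothesis holds for $i = 1$. Pick $\overline{K}$-points $t_i \in T_i(\overline{K})$, which exist since each $T_{i,\overline{K}}$ is a non-empty $\overline{K}$-variety; the maps $g \mapsto t_i + g$ yield torsor trivializations $T_{i,\overline{K}} \simeq G_{i,\overline{K}}$. Under these, $\psi_{\overline{K}}$ becomes a variety isomorphism $\psi': G_{1,\overline{K}} \to G_{2,\overline{K}}$. Corollary \ref{cor:main} applied over $\overline{K}$ furnishes an algebraic group isomorphism $\xi: G_{1,\overline{K}} \to G_{2,\overline{K}}$, so $\xi^{-1} \circ \psi'$ is a variety automorphism of $G_{1,\overline{K}}$; by hypothesis it equals $\tau_h \circ \phi_0$ for some group automorphism $\phi_0$ of $G_{1,\overline{K}}$ and some translation $\tau_h$. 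Hence $\psi' = \tau_{\xi(h)} \circ \tilde{\phi}$, where $\tilde{\phi} := \xi \circ \phi_0$ is a group isomorphism $G_{1,\overline{K}} \to G_{2,\overline{K}}$.

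The heart of the argument---and the main obstacle---is showing that $\tilde{\phi}$ is Galois invariant, since the trivializations themselves are not. For $\sigma \in \Gal(\overline{K}/K)$, write $\sigma(t_i) = t_i + c_i(\sigma)$ with $c_i(\sigma) \in G_i(\overline{K})$. A direct computation using that $\psi$ is defined over $K$ gives
\[ (\sigma\psi')(g) \;=\; \tilde{\phi}(g) + \tilde{\phi}(c_1(\sigma)) + \xi(h) - c_2(\sigma), \]
so the ``linear part'' of $\sigma\psi'$ is again $\tilde{\phi}$. On the other hand $\sigma\psi' = \tau_{\sigma(\xi(h))} \circ \sigma(\tilde{\phi})$, and the uniqueness of the translation-plus-homomorphism decomposition forces $\sigma(\tilde{\phi}) = \tilde{\phi}$. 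Hence $\tilde{\phi}$ descends to a group isomorphism $\phi: G_1 \to G_2$ over $K$. Finally, the $\phi$-equivariance of $\psi$ is an equality of two $K$-morphisms, which may be checked after base change to $\overline{K}$, where it reduces to the additivity of $\tilde{\phi}$.
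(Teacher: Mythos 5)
Your proof is correct and takes essentially the same route as the paper's: trivialize the torsors over $\bar{K}$ (or over $K$ itself via $H^1(K,\mathbb{G}_{a,K}^n)=0$ in the vector-group case), combine Corollary \ref{cor:main} with the automorphism hypothesis to extract a group isomorphism, and descend it to $K$ by the cocycle computation with $\sigma(t_i)=t_i+c_i(\sigma)$. The only (harmless) deviations are that you do not normalize $t_2=\psi(t_1)$ and so carry an explicit translation term, and that in case (2) you transport a rational point through $\psi$ instead of first showing both groups are vector groups.
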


Theorem \ref{thm:partialanswer} applies if $G_1$ or $G_2$ is a semiabelian variety since, over an algebraic closure of $K$, every morphism from a connected algebraic group to a semiabelian variety is a composition of a homomorphism of algebraic groups with a translation, a fact which we will later generalize in certain situations. Later, we will also characterize all connected commutative algebraic groups over $K$ which satisfy condition (1) in Theorem \ref{thm:partialanswer}.

\begin{proof}
Suppose that $\psi: T_1 \to T_2$ is an isomorphism of algebraic varieties. Let $0_{G_1}$ and $0_{G_2}$ denote the respective neutral elements of $G_1$ and $G_2$. Fix an algebraic closure $\bar{K}$ of $K$ and base points $t_1 \in T_1(\bar{K})$ and $t_2 := \psi(t_1) \in T_2(\bar{K})$. By abuse of notation, we will often denote $K$-morphisms and their base changes to $\bar{K}$ by the same symbol. By the definition of a torsor, the restriction of $(\mathrm{pr}_{T_i},\alpha_i)_{\bar{K}}: (T_i)_{\bar{K}} \times_{\bar{K}} (G_i)_{\bar{K}} \to (T_i)_{\bar{K}} \times_{\bar{K}} (T_i)_{\bar{K}}$ to $\{t_i\} \times_{\bar{K}} (G_i)_{\bar{K}}$ induces an isomorphism $\zeta_i: (G_i)_{\bar{K}} \to (T_i)_{\bar{K}}$ ($i = 1,2$). We obtain an isomorphism of algebraic varieties $\chi := \zeta_2^{-1} \circ \psi \circ \zeta_1: (G_1)_{\bar{K}} \to (G_2)_{\bar{K}}$. By definition, we have
\[ \psi(\alpha_1(t_1,g)) = (\psi \circ \zeta_1)(g) = (\zeta_2 \circ \chi)(g) = \alpha_2(t_2,\chi(g))\]
for all $g \in G_1(\bar{K})$. We also know that $\chi(0_{G_1}) = 0_{G_2}$ since $\psi(t_1) = t_2$. Note that $(G_i)_{\bar{K}}$, and hence also $(T_i)_{\bar{K}}$, is irreducible by Summary 1.36 in \cite{MilneAG} since $G_i$ is connected ($i=1,2$).

Suppose first that $G_1$ or $G_2$ is a vector group. Since $(G_1)_{\bar{K}}$ and $(G_2)_{\bar{K}}$ are isomorphic as algebraic varieties, it follows from Proposition 14.51 in \cite{GoertzWedhorn} that both $G_1$ and $G_2$ are affine. Furthermore, by Corollary \ref{cor:main}, $(G_1)_{\bar{K}}$ and $(G_2)_{\bar{K}}$ are also isomorphic as algebraic groups. We deduce from this together with Corollary 14.9 in \cite{MilneAG} that both $G_1$ and $G_2$ are unipotent and hence, by Proposition 14.32 in \cite{MilneAG}, vector groups. By Propositions 5 and 6 in Sections 1 and 2 respectively of Chapter III of \cite{SerreGC}, the torsors $T_1$ and $T_2$ are both trivial, i.e., both have a $K$-rational point, and the theorem follows from Corollary \ref{cor:main}.

Suppose now that every variety automorphism of $(G_1)_{\bar{K}}$ is a composition of an automorphism of algebraic groups with a translation; note that this is equivalent to the same condition holding for the base change of $G_1$ to an arbitrary algebraic closure of $K$. By Corollary \ref{cor:main}, there exists an isomorphism of algebraic groups $\omega: (G_2)_{\bar{K}} \to (G_1)_{\bar{K}}$. It follows that $\omega \circ \chi$ is a composition of an automorphism of algebraic groups with a translation. Since $(\omega \circ \chi)(0_{G_1}) = 0_{G_1}$, $\omega \circ \chi$ is an automorphism of algebraic groups and therefore $\chi = \omega^{-1} \circ (\omega \circ \chi)$ is an isomorphism of algebraic groups. If every variety automorphism of $(G_2)_{\bar{K}}$ is a composition of an automorphism of algebraic groups with a translation, then it similarly follows that $\chi^{-1}$ is an isomorphism of algebraic groups. In any case, we obtain that $\chi$ is an isomorphism of algebraic groups.

The Galois group $\Gal(\bar{K}/K)$ acts on $\bar{K}$-varieties, $\bar{K}$-morphisms, and $\bar{K}$-points by base change in the usual way. For $K$-varieties $V$ and $W$, a morphism of $K$-varieties $f: V \to W$, and $\sigma \in \Gal(\bar{K}/K)$, we will identify $V_{\bar{K}}$, $W_{\bar{K}}$, and $f_{\bar{K}}$ with $\sigma\left(V_{\bar{K}}\right)$, $\sigma\left(W_{\bar{K}}\right)$, and $\sigma\left(f_{\bar{K}}\right)$ respectively via the canonical isomorphisms. Let now $\sigma \in \Gal(\bar{K}/K)$ and $g \in G_1(\bar{K})$ be arbitrary, but fixed. Set $h_i = \zeta_i^{-1}(\sigma(t_i))$ ($i = 1,2$). We have $\chi(h_1) = \zeta_2^{-1}(\psi(\sigma(t_1))) = \zeta_2^{-1}(\sigma(t_2)) = h_2$ since $\psi$ is defined over $K$ and $t_2 = \psi(t_1)$. Furthermore, it follows from the definition of $h_2$ that
\[ \sigma(\zeta_2)(\sigma(\chi)(g)) =  \alpha_2(\sigma(t_2),\sigma(\chi)(g)) = 
\alpha_2(\alpha_2(t_2,h_2),\sigma(\chi)(g)).\]
Since $\alpha_2$ is a group action, we deduce that
\[\sigma(\zeta_2)(\sigma(\chi)(g)) = \alpha_2(t_2,h_2+\sigma(\chi)(g)) = \zeta_2(h_2+\sigma(\chi)(g)).\]
On the other hand
\begin{align*}
\sigma(\zeta_2)(\sigma(\chi)(g)) =  \sigma(\zeta_2 \circ \chi)(g) = \sigma(\psi \circ \zeta_1)(g) = \psi(\sigma(\zeta_1)(g)) = \psi(\alpha_1(\sigma(t_1),g)) \\
= \psi(\alpha_1(\alpha_1(t_1,h_1),g)) = \psi(\alpha_1(t_1,h_1+g)) = (\psi \circ \zeta_1)(h_1+g) = (\zeta_2 \circ \chi)(h_1+g) \\
= \zeta_2(\chi(h_1)+\chi(g))
= \zeta_2(h_2+\chi(g)).
\end{align*}
Since $\zeta_2$ is an isomorphism, we obtain that $\sigma(\chi)(g) = \chi(g)$. This holds for all $g \in G_1(\bar{K})$ and so, by Theorem 3.37 in \cite{GoertzWedhorn}, $\chi = \sigma(\chi)$. So $\chi$ is $\Gal(\bar{K}/K)$-invariant and therefore descends to a morphism $\phi: G_1 \to G_2$ by Proposition 2.8 in \cite{Jahnel} or Theorem 14.70(1) and Section (14.20) in \cite{GoertzWedhorn} (note that $\chi$ automatically descends to a morphism defined over a finite Galois extension of $K$). It follows from Proposition 14.51 in \cite{GoertzWedhorn} that $\phi$ is an isomorphism of algebraic varieties and Theorem 14.70(1) in \cite{GoertzWedhorn} implies that $\phi$ is also an isomorphism of algebraic groups since $\chi$ is. We have
\begin{align*}
\alpha_2(\psi(t),\phi(g)) = \alpha_2(t_2,\zeta_2^{-1}(\psi(t))+\phi(g)) =
\zeta_2(\zeta_2^{-1}(\psi(t))+\phi(g)) \\ = (\psi \circ \zeta_1 \circ \chi^{-1})(\zeta_2^{-1}(\psi(t))+\phi(g))
=  \psi(\alpha_1(t_1,\chi^{-1}(\zeta_2^{-1}(\psi(t))+\phi(g)))) \\
= \psi(\alpha_1(t_1,\zeta_1^{-1}(t)+g)) = \psi(\alpha_1(\alpha_1(t_1,\zeta_1^{-1}(t)),g)) = \psi(\alpha_1(t,g))
\end{align*}
for all $t \in T_1(\bar{K})$ and $g \in G_1(\bar{K})$ and hence $\alpha_2 \circ (\psi \times_K \phi) = \psi \circ \alpha_1$ by Theorems 3.37 and 14.70(1) in \cite{GoertzWedhorn} as desired.
\end{proof}

As already mentioned, if the domain has no non-trivial unipotent group as a direct factor, we can give an explicit description of the sets of all (not necessarily pointed) morphisms and isomorphisms of algebraic varieties between two connected commutative algebraic groups over a field of characteristic $0$:

\begin{thm}\label{thm:explicit}
Let $G$ and $H$ be two connected commutative algebraic groups over a field $K$ of characteristic $0$ such that $G$ is not isomorphic to a product $U' \times_K G'$ of a non-trivial unipotent group $U'$ and an algebraic group $G'$.

Let $\phi: G \to H$ be a morphism of algebraic varieties. Then there exist a torus $T$, a commutative unipotent group $U$, homomorphisms of algebraic groups $p: G \to T$, $i: U \to H$, and $\psi: G \to H$, a translation $\tau: H \to H$, and a morphism of pointed algebraic varieties $\chi: T \to U$ (with the neutral element being the respective distinguished point) such that $\phi = \tau \circ \psi + i \circ \chi \circ p$.

Furthermore, $\tau$ and $\psi$ (and therefore $i \circ \chi \circ p$) are uniquely determined by these properties and by $\phi$. The unique functor $\mathcal{G}$ from Theorem \ref{thm:main} satisfies $\mathcal{G}(\tau^{-1} \circ \phi) = \psi$ and $\phi$ is an isomorphism of algebraic varieties if and only if $\psi$ is an isomorphism of algebraic groups.
\end{thm}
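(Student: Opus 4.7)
The plan is to decompose $\phi$ as a translation part, a group-homomorphism part, and a ``remainder'' that factors through a torus. Concretely, let $\tau\colon H\to H$ be translation by $\phi(0_G)$ and set $\psi:=\mathcal{G}(\tau^{-1}\circ\phi)$, which is a homomorphism of algebraic groups by Theorem~\ref{thm:main}. Setting $\gamma:=\tau^{-1}\circ\phi-\psi$, a pointed morphism $G\to H$, additivity of $\mathcal{G}$ and the identity $\mathcal{G}(\psi)=\psi$ yield $\mathcal{G}(\gamma)=0$. The task is to exhibit $\gamma$ in the form $i\circ\chi\circ p$. Unpacking the construction of $\mathcal{G}$ in the proof of Theorem~\ref{thm:main}, the relation $\mathcal{G}(\gamma)=0$ is equivalent to the three vanishings: $\gamma|_{G_{\ant}}=0$ (since $\gamma|_{G_{\ant}}$ is automatically a homomorphism by Lemma~1.5(i) of~\cite{Brion09} and agrees with $\mathcal{G}(\gamma)|_{G_{\ant}}$), the induced homomorphism $(G_{\aff})_s\to(H_{\aff})_s$ is zero, and the differential of $\gamma|_{(G_{\aff})_u}$ at $0_G$ vanishes.

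The hypothesis on $G$ is used to deduce $(G_{\aff})_u\subset G_{\ant}$, equivalently, that $G/G_{\ant}$ is a torus: any non-trivial vector subgroup $W\subset(G_{\aff})_u$ complementary to $(G_{\aff})_u\cap G_{\ant}$ would split off as a direct factor of $G$, by exploiting the vanishing of $\mathrm{Ext}^1$ between groups of multiplicative type and vector groups in characteristic $0$ to split the resulting extension inside $F:=G_{\aff}\cap G_{\ant}$. Consequently $\gamma|_{(G_{\aff})_u}=0$, so $\gamma$ vanishes on the whole subgroup $G_{\ant}$, which now contains $(G_{\aff})_u$.

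To descend $\gamma$ through the quotient homomorphism $p\colon G\to T:=G/G_{\ant}$ (a torus by the previous paragraph), I invoke rigidity of morphisms from antiaffine groups. For each $g\in G$, the morphism $a\mapsto\gamma(g+a)-\gamma(g)$ is a pointed morphism from $G_{\ant}$, hence a homomorphism $h_g\colon G_{\ant}\to H$ by Lemma~1.5(i) of~\cite{Brion09}; this family varies algebraically in $g$ and satisfies $h_0=\gamma|_{G_{\ant}}=0$. Rigidity (discreteness) of $\underline{\mathrm{Hom}}(G_{\ant},H)$ for antiaffine source combined with connectedness of $G$ forces $h_g\equiv 0$, so $\gamma(g+a)=\gamma(g)$ for all $g,a$, giving $\gamma=\bar\gamma\circ p$ for a pointed morphism $\bar\gamma\colon T\to H$. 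Because $T$ is affine and connected, Example~8.4 of~\cite{MilneAG} makes the composite $T\to H\to H/H_{\aff}$ trivial, so $\bar\gamma$ factors through $H_{\aff}$; its projection to $(H_{\aff})_s$ is a homomorphism (Proposition~12.49 of~\cite{MilneAG}) equal to the corresponding piece of $\mathcal{G}(\bar\gamma)$, which vanishes since $p$ is surjective and $\mathcal{G}(\gamma)=0$. Hence $\bar\gamma$ lands in $U:=(H_{\aff})_u$, and with $i\colon U\hookrightarrow H$ the inclusion and $\chi:=\bar\gamma$ we obtain $\gamma=i\circ\chi\circ p$.

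Uniqueness of $\tau$ follows from evaluation at $0_G$. Applying $\mathcal{G}$ to $\tau^{-1}\circ\phi=\psi+i\circ\chi\circ p$ and using that $\mathcal{G}(\chi)\colon T\to U$ is a homomorphism from a torus to a unipotent group and hence zero by Corollary~14.18(a) of~\cite{MilneAG}, yields $\mathcal{G}(\tau^{-1}\circ\phi)=\psi$, which identifies $\psi$. The biconditional $\phi$ is an isomorphism of varieties $\Leftrightarrow$ $\psi$ is an isomorphism of groups is immediate in $(\Rightarrow)$, since $\mathcal{G}$ is a functor and sends the pointed variety-isomorphism $\tau^{-1}\circ\phi$ to the group-isomorphism $\psi$. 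For $(\Leftarrow)$, I define the candidate inverse of $\tau^{-1}\circ\phi=\psi+i\circ\chi\circ p$ by
\[ h\mapsto\psi^{-1}(h)-\psi^{-1}\bigl(i\bigl(\chi\bigl(p(\psi^{-1}(h))\bigr)\bigr)\bigr) \]
and verify both composite identities using the crucial cancellation $\psi^{-1}(U)=(G_{\aff})_u\subset G_{\ant}=\ker p$, which makes $i\circ\chi\circ p$ unchanged under translation by elements in the image of $\psi^{-1}\circ i$. The main obstacles I anticipate are justifying the rigidity step that produces the descent of $\gamma$ to a morphism from the torus $T$ and the structural equivalence identifying the hypothesis with $(G_{\aff})_u\subset G_{\ant}$, both requiring careful structural analysis.
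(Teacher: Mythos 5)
Your proposal is correct in outline but organizes the argument genuinely differently from the paper. You take $\psi:=\mathcal{G}(\tau^{-1}\circ\phi)$ from Theorem \ref{thm:main} as the starting point and analyse the error term $\gamma$ with $\mathcal{G}(\gamma)=0$; the paper instead builds the decomposition from scratch using Theorem 3.4 of \cite{Brion09}, which supplies a torus $T_1\subset G$ and a unipotent $U_1\subset G$ with $U_1\times_K T_1\times_K G_{\ant}\to G$ an isogeny. That structure theorem does double duty there: it yields your structural claim (the hypothesis forces $U_1=0$, so $G/G_{\ant}$ is a torus, i.e.\ $(G_{\aff})_u\subset G_{\ant}$) \emph{and} it lets the paper avoid rigidity entirely, because the ``torus direction'' is an honest affine subgroup $T_1$, so the map $(t,g)\mapsto\phi(t+g)-\phi(g)$ factors through $H_{\aff}$ simply because $T_1$ is affine and connected; the descent from $T_1$ to $T=G/G_{\ant}$ is then done by a torsion argument on $T_1\cap G_{\ant}$. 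The paper then verifies by direct computation that its $\psi$ is a homomorphism, whereas in your setup this is free; conversely, your route makes the identity $\mathcal{G}(\tau^{-1}\circ\phi)=\psi$ and the uniqueness of $\psi$ immediate, which the paper has to argue separately. Your derivation of the structural claim (splitting a complement $W$ of $(G_{\aff})_u\cap G_{\ant}$ in $(G_{\aff})_u$ off as a direct factor of $G=G_{\aff}+G_{\ant}$) is the right idea and can be made to work, though you should check that $W\cap\bigl((G_{\aff})_s+G_{\ant}\bigr)$ is trivial rather than appeal vaguely to $\mathrm{Ext}^1$.

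The one step you must actually supply is the rigidity claim: ``discreteness of $\underline{\Hom}(G_{\ant},H)$'' for an antiaffine source is not an off-the-shelf citation, and $G_{\ant}$ is neither proper nor affine in general, so neither the classical rigidity lemma nor the paper's affineness trick applies directly to $h_g$. It is nonetheless true, by a two-stage reduction: compose $h_g$ with $H\to H/H_{\aff}$; each $\pi\circ h_g$ is a homomorphism to an abelian variety, hence kills the connected affine subgroup $(G_{\ant})_{\aff}$ and descends to the abelian variety $A':=G_{\ant}/(G_{\ant})_{\aff}$; the whole family descends to a morphism $G\times_K A'\to H/H_{\aff}$ by faithful flatness, and now $A'$ is proper, so the rigidity lemma together with $h_{0_G}=0$ gives $\pi\circ h_g=0$ for all $g$. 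Then each $h_g$ lands in the affine group $H_{\aff}$, hence is constant by antiaffineness of $G_{\ant}$, hence zero. With this inserted (and with the scheme-theoretic descent of $\gamma$ through the faithfully flat $p$ handled as in the paper, via Theorem 3.37 in \cite{GoertzWedhorn} and Proposition 5.21 in \cite{MilneAG}), the rest of your argument--the factorization of $\bar\gamma$ through $(H_{\aff})_u$, the uniqueness statements, and both directions of the isomorphism criterion--goes through; your explicit inverse in the $(\Leftarrow)$ direction agrees with the paper's, and the needed cancellation is most cleanly justified by noting that $p\circ\psi^{-1}\circ i:U\to T$ is a homomorphism from a unipotent group to a torus, hence trivial.
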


Over a base field of arbitrary characteristic, any morphism of algebraic varieties from a smooth connected algebraic group to a semiabelian variety is the composition of a homomorphism and a translation thanks to Lemma 5.4.8 and Remark 5.4.9(i) in \cite{Brion17}; see also Lemma 4.1 in \cite{Kahn}, Theorem 2 in \cite{Iitaka}, and Theorem 3 in \cite{Rosenlicht61}. Under the additional hypotheses that the base field is of characteristic $0$ and that the domain is commutative and has no non-trivial unipotent direct factor, Theorem \ref{thm:explicit} generalizes this fact in that it allows any connected commutative algebraic group as codomain, but yields the same conclusion if the codomain is a semiabelian variety.

If one could describe the set of isomorphisms of algebraic varieties between $G$ and $H$ explicitly without requiring $G$ to have no non-trivial unipotent direct factor, one would in particular obtain an explicit description of the automorphism group of affine space, which is a difficult open problem; the wide open Jacobian conjecture gives a conjectural characterization of this automorphism group (in characteristic $0$).

\begin{proof}[Proof of Theorem \ref{thm:explicit}]
Let $\phi: G \to H$ be an arbitrary morphism of algebraic varieties. Let $0_G$ and $0_H$ denote the respective neutral elements of $G$ and $H$.

By Theorem 3.4 in \cite{Brion09}, there exist a unipotent subgroup $U_1 \subset G$ and a torus $T_1 \subset G$ such that the addition homomorphism $U_1 \times_K T_1 \times_K G_{\ant} \to G$ is an isogeny. Since $U_1$ contains no non-trivial finite algebraic subgroups, the kernel of this isogeny must be contained in $\{0_G\} \times_K T_1 \times_K G_{\ant}$ so that
\[ G \simeq U_1 \times_K ((T_1 \times_K G_{\ant})/(T_1 \cap G_{\ant})).\]
Our hypothesis on $G$ now implies that $U_1$ is trivial and so the addition homomorphism $T_1 \times_K G_{\ant} \to G$ is an isogeny.

Fix an algebraic closure $\bar{K}$ of $K$. Again, by abuse of notation, we will often denote $K$-morphisms and their base changes to $\bar{K}$ by the same symbol. The morphism $\phi$ induces a morphism $\phi_1: T_1 \times_K G_{\ant} \to H$, sending $(t,g) \in T_1(\bar{K}) \times G_{\ant}(\bar{K})$ to $\phi(t+g)-\phi(g)$. For any fixed $g \in G_{\ant}(\bar{K})$, the induced morphism $(T_1)_{\bar{K}} \to (H/H_{\aff})_{\bar{K}}$ sends $0_G$ to the neutral element of $H/H_{\aff}$. By Corollary 3.6 in \cite{Milne}, this morphism is a homomorphism of algebraic groups since $(H/H_{\aff})_{\bar{K}}$ is an abelian variety. By Proposition 8.1 in \cite{MilneAG}, its image is an affine algebraic group. But since $(H/H_{\aff})_{\bar{K}}$ is an abelian variety, this image must be trivial by Example 8.4 in \cite{MilneAG}. Hence, $\phi_1$ has to factor through $H_{\aff}$.

But then for fixed $t \in T_1(\bar{K})$, the induced morphism $(G_{\ant})_{\bar{K}} \to (H_{\aff})_{\bar{K}}$ must be constant since $H_{\aff}$ is affine and $G_{\ant}$ is antiaffine (note that antiaffinity is preserved under field extensions thanks to Lemma 1.1 in \cite{Brion09}). It follows that $\phi(t+g) = \phi(g) + \phi_2(t)$ for some morphism $\phi_2: T_1 \to H_{\aff}$ and all $g \in G_{\ant}(\bar{K})$, $t \in T_1(\bar{K})$. By Corollary 16.15 in \cite{MilneAG}, there exist a unipotent group $U \subset H_{\aff}$ and a torus $T' \subset H_{\aff}$ such that the addition homomorphism $U \times_K T' \to H_{\aff}$ is an isomorphism. Using this isomorphism, we identify $U \times_K T'$ with $H_{\aff}$ so that $\phi_2 = (\phi_{2,1},\phi_{2,2})$ with $\phi_{2,1}: T_1 \to U$ and $\phi_{2,2}: T_1 \to T'$. Furthermore, $\phi_2(0_G) = 0_H$, which implies the same for $\phi_{2,1}$ and $\phi_{2,2}$.

The group $(T_1 \cap G_{\ant})(\bar{K})$ is finite. Suppose that $t \in (T_1 \cap G_{\ant})(\bar{K})$ and let $n \in \mathbb{N}$ denote the order of $t$. For any $t' \in T_1(\bar{K})$, we have
\[n(\phi_2(t'+t)-\phi_2(t')) = n(\phi(t')-\phi(-t)-\phi(t')+\phi(0_G)) = n(\phi(0_G)-\phi(-t))\]
by definition of $\phi_2$. Since $\phi|_{G_{\ant}}$ is the composition of a homomorphism and a translation by Lemma 1.5(i) in \cite{Brion09}, we also have that
\[ n(\phi(0_G)-\phi(-t)) = \phi(0_G) - \phi(n(-t)) = \phi(0_G) - \phi(0_G) = 0_H.\]
Putting the two chains of equality together, we deduce that $n(\phi_2(t'+t)-\phi_2(t')) = 0_H$. Since $U(\bar{K})$ is torsion-free, $\phi_{2,1}(t'+t) = \phi_{2,1}(t')$. It follows that the morphism $(T_1 \cap G_{\ant})_{\bar{K}} \times_{\bar{K}} (T_1)_{\bar{K}} \to U_{\bar{K}}$ that is given by the addition homomorphism of $(T_1)_{\bar{K}}$ composed with $(\phi_{2,1})_{\bar{K}}$ is the same as the morphism that is given by the projection to $(T_1)_{\bar{K}}$ followed by $(\phi_{2,1})_{\bar{K}}$ thanks to Theorem 3.37 in \cite{GoertzWedhorn}, applied to the restrictions of the two morphisms to each connected component of $(T_1 \cap G_{\ant})_{\bar{K}} \times_{\bar{K}} (T_1)_{\bar{K}}$. By Theorem 14.70(1) in \cite{GoertzWedhorn}, the two corresponding morphisms $(T_1 \cap G_{\ant}) \times_K T_1 \to U$ coincide as well. So, by Proposition 5.21 in \cite{MilneAG}, $\phi_{2,1}$ factors through the torus $T := T_1/(T_1 \cap G_{\ant}) = G/G_{\ant}$ and is induced by a morphism of algebraic varieties $\chi: T \to U$ which sends the neutral element of $T$ to $0_H$.

Let $p: G \to T$ denote the canonical quotient homomorphism, let $i: U \to H$ denote the canonical inclusion homomorphism, and let $\tau: H \to H$ denote the translation by $\phi(0_G)$. Set $\psi = \tau^{-1} \circ (\phi - i \circ \chi \circ p)$ so that $\phi = \tau \circ \psi + i \circ \chi \circ p$.

By construction, we have that
\begin{equation}\label{eq:someequation}
\psi(t+g) = \phi(g)-\phi(0_G)+\phi_{2,2}(t)
\end{equation}
for all $g \in G_{\ant}(\bar{K})$ and all $t \in T_1(\bar{K})$. Furthermore, the addition homomorphism $T_1 \times_K G_{\ant} \to G$ is surjective.

Thus, if $\tilde{g}_1, \tilde{g}_2 \in G(\bar{K})$, then there exist $t_1, t_2 \in T_1(\bar{K})$ and $g_1, g_2 \in G_{\ant}(\bar{K})$ such that $\tilde{g}_i = t_i + g_i$ for $i = 1,2$. Using  \eqref{eq:someequation}, we deduce that
\[ \psi(\tilde{g}_1+\tilde{g}_2) = \psi((t_1+t_2)+(g_1+g_2)) = \phi(g_1+g_2)-\phi(0_G)+\phi_{2,2}(t_1+t_2).\]
Since $\phi_{2,2}(0_G) = 0_H$, it follows from Proposition 12.49 in \cite{MilneAG} that $\phi_{2,2}$ is a homomorphism of algebraic groups and hence
\[ \psi(\tilde{g}_1+\tilde{g}_2) = \phi(g_1+g_2)-\phi(0_G)+\phi_{2,2}(t_1)+\phi_{2,2}(t_2).\]
But Lemma 1.5(i) in \cite{Brion09} implies that $\phi(g_1+g_2) - \phi(0_G) = \phi(g_1)-\phi(0_G) + \phi(g_2)-\phi(0_G)$ and it follows that
\[ \psi(\tilde{g}_1+\tilde{g}_2) = (\phi(g_1)-\phi(0_G)+\phi_{2,2}(t_1))+(\phi(g_2)-\phi(0_G)+\phi_{2,2}(t_2)).\]
Applying \eqref{eq:someequation} again, we deduce that
\[ \psi(\tilde{g}_1+\tilde{g}_2) = \psi(t_1+g_1) + \psi(t_2+g_2) = \psi(\tilde{g}_1) + \psi(\tilde{g}_2).\]

It follows that $\psi$ is a homomorphism of algebraic groups. We compute that the functor $\mathcal{G}$ from Theorem \ref{thm:main} satisfies
\[ \mathcal{G}(\tau^{-1} \circ \phi) = \mathcal{G}(\psi + i \circ \chi \circ p) = \mathcal{G}(\psi) + i \circ \mathcal{G}(\chi) \circ p = \mathcal{G}(\psi) = \psi\]
as desired since the homomorphism of algebraic groups $\mathcal{G}(\chi): T \to U$ must be trivial thanks to Corollary 14.18(b) in \cite{MilneAG}. This implies that $\psi$ is an isomorphism of algebraic groups if $\phi$ is an isomorphism of algebraic varieties.

The uniqueness of $\tau$ is clear since $\tau: H \to H$ is a translation, so determined by $\tau(0_G)$, and $\tau(0_G) = \phi(0_G)$. The uniqueness of $\psi$ follows from the fact that a difference of two compositions of the form $i \circ \chi \circ p$ is a homomorphism of algebraic groups if and only if it is trivial thanks to Corollary 14.18(b) in \cite{MilneAG}.

Finally, suppose that $\psi$ is an isomorphism of algebraic groups. We want to show that $\phi$ is an isomorphism of algebraic varieties. We can assume without loss of generality that $\tau = \id_H$. It follows from Corollary 14.18(a) in \cite{MilneAG} that the homomorphism of algebraic groups $p \circ \psi^{-1} \circ i: U \to T$ is trivial. This implies that
\[ (\id_H - i \circ \chi \circ p \circ \psi^{-1}) \circ (\id_H + i \circ \chi \circ p \circ \psi^{-1}) = (\id_H + i \circ \chi \circ p \circ \psi^{-1}) \circ (\id_H - i \circ \chi \circ p \circ \psi^{-1}) = \id_H\]
as well as $(\id_H - i \circ \chi \circ p \circ \psi^{-1}) \circ \phi = \psi$. So $\phi$ is an isomorphism of algebraic varieties and we are done.
\end{proof}

We can now characterize the connected commutative algebraic groups that satisfy condition (1) in Theorem \ref{thm:partialanswer}:

\begin{thm}\label{thm:characterization}
Let $G$ be a connected commutative algebraic group over a field $K$ of characteristic $0$. The following are equivalent:
\begin{enumerate}
\item every variety automorphism of $G$ is a composition of an automorphism of algebraic groups with a translation,
\item every variety automorphism of the base change of $G$ to an algebraic closure of $K$ is a composition of an automorphism of algebraic groups with a translation, and
\item $G$ is antiaffine or $G$ is a semiabelian variety or there exists an antiaffine semi\-abelian variety $H$ over $K$ such that $G \simeq \mathbb{G}_{a,K} \times_K H$ as algebraic groups.
\end{enumerate}
\end{thm}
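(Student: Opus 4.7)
The plan is to prove this via the cycle of implications (2) $\Rightarrow$ (1), $\neg$(3) $\Rightarrow \neg$(1), and (3) $\Rightarrow$ (2). For (2) $\Rightarrow$ (1), I would apply standard Galois descent: given a variety automorphism $\phi$ of $G$ over $K$, first replace $\phi$ by $\tau^{-1} \circ \phi$ where $\tau$ is translation by $\phi(0_G) \in G(K)$ to make it pointed; then (2) applied over $\bar{K}$ shows that the base change is a group automorphism, and Theorem 14.70(1) in \cite{GoertzWedhorn} lets the property of being a group homomorphism descend to $K$.

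For (3) $\Rightarrow$ (2), I would first reduce to $K = \bar{K}$ (antiaffinity is preserved by base change thanks to Lemma 1.1 in \cite{Brion09}, while semiabelian-ness and the direct-product form are clearly preserved) and then treat the three clauses of (3) separately. When $G$ is antiaffine, Lemma 1.5(i) in \cite{Brion09} applied with $G_{\ant} = G$ immediately yields that every pointed morphism $G \to G$ is a homomorphism. When $G$ is semiabelian, the result of Brion (Lemma 5.4.8 and Remark 5.4.9(i) in \cite{Brion17}) cited after Theorem \ref{thm:explicit} shows that any morphism $G \to G$ is a homomorphism composed with a translation. When $G = \mathbb{G}_{a,K} \times H$ with $H$ antiaffine semiabelian, the antiaffinity of $H$ forces $\mathcal{O}(G) = K[t]$, so the $\mathbb{G}_{a,K}$-component of $\phi$ depends only on the $\mathbb{G}_{a,K}$-coordinate; the $H$-component is a morphism to a semiabelian variety, hence again a homomorphism composed with a translation; and since every group homomorphism $\mathbb{G}_{a,K} \to H$ is trivial, one finds $\phi(x, h) = (cx + b, \beta(h) + a)$ for some $c \in K^{\times}$, $b \in K$, $a \in H(K)$, and some group automorphism $\beta$ of $H$, which is of the desired form.

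For $\neg$(3) $\Rightarrow \neg$(1), suppose $G$ does not satisfy (3). Let $n \geq 0$ be maximal such that $G \simeq \mathbb{G}_{a,K}^n \times G'$ with $G'$ admitting no non-trivial unipotent direct factor (the process of peeling off $\mathbb{G}_{a,K}$-factors terminates by dimension, and in characteristic zero a commutative unipotent group is a vector group, so ``no unipotent direct factor'' is equivalent to ``no $\mathbb{G}_{a,K}$ direct factor''). If $n \geq 2$, then $\mathbb{G}_{a,K}^2$ is a direct factor of $G$, and the Jonqui\`ere-type map $(x, y) \mapsto (x + y^2, y)$ on $\mathbb{G}_{a,K}^2$, extended by the identity on the complementary factor, is a pointed variety automorphism of $G$ that fails to be a group homomorphism. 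If $n = 1$, the hypothesis forces $G'$ not to be antiaffine semiabelian: either $G'$ is not antiaffine, in which case I pick a non-constant $f \in \mathcal{O}(G')$ with $f(0_{G'}) = 0$, replacing $f$ by $f^2$ if $f$ happens to be a group homomorphism (so that $f$ becomes non-homomorphic, using characteristic zero), and then $(x, g) \mapsto (x + f(g), g)$ is the required pointed variety automorphism; or $G'$ is antiaffine but not semiabelian, whence $(G'_{\aff})_u$ is a non-trivial vector group, and fixing a group inclusion $\iota: \mathbb{G}_{a,K} \hookrightarrow (G'_{\aff})_u \subset G'$, the map $(x, g) \mapsto (x, g + \iota(x^2))$ is a pointed variety automorphism with inverse $(x, g) \mapsto (x, g - \iota(x^2))$ that fails the homomorphism identity by $(0, \iota(2 x_1 x_2))$. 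If $n = 0$, then $G$ has no non-trivial unipotent direct factor and Theorem \ref{thm:explicit} applies; since $G$ is neither antiaffine nor semiabelian, both $T = G/G_{\ant}$ and $U = (G_{\aff})_u$ are non-trivial, and I would choose a non-trivial pointed morphism $\chi: T \to U$ (possible because $\mathcal{O}(T) \neq K$) and set $\phi = \id_G + i \circ \chi \circ p$ with $p: G \to T$ and $i: U \hookrightarrow G$ the canonical homomorphisms. A key observation is that $(G_{\aff})_u \subset G_{\ant}$ (there is no non-trivial homomorphism from a unipotent group to the torus $T$), which gives $(i \circ \chi \circ p)^2 = 0$ and makes $\phi$ a variety automorphism with inverse $\id_G - i \circ \chi \circ p$; and $\phi$ fails to be a group homomorphism because Corollary 14.18(b) in \cite{MilneAG} forces every group homomorphism $T \to U$ to be trivial, contradicting the non-triviality of $\chi$.

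The main obstacle is the $\neg$(3) $\Rightarrow \neg$(1) direction, where no single construction handles all sub-cases: the three sub-cases indexed by $n$ each require a different type of ``obstructing'' morphism, and within the $n = 1$ case one must further distinguish whether $G'$ is antiaffine or not. Verifying that each map is a genuine variety automorphism and that its non-homomorphic nature survives composition with all translations requires careful bookkeeping; the $n = 0$ sub-case additionally relies on the subtle containment $(G_{\aff})_u \subset G_{\ant}$ to make the key nilpotency $(i \circ \chi \circ p)^2 = 0$ possible.
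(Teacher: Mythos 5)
Your proof is correct, and its core devices coincide with the paper's: the Jonqui\`ere automorphism for $n \geq 2$, the maps of the form $\id_G + i \circ \chi \circ p$ whose failure to be homomorphisms is detected by the absence of non-trivial homomorphisms between tori and unipotent groups (Corollary 14.18 in \cite{MilneAG}), and the three-way case split for the rigidity direction (Lemma 1.5(i) in \cite{Brion09} for antiaffine groups, Brion's result for semiabelian varieties, and a splitting argument for $\mathbb{G}_{a,K} \times_K H$). Two organizational choices are genuinely different and worth recording. First, you close the equivalence via the cycle $(2)\Rightarrow(1)\Rightarrow(3)\Rightarrow(2)$, proving $(2)\Rightarrow(1)$ by directly descending the homomorphism property of $(\tau^{-1}\circ\phi)_{\bar K}$ from $\bar K$ to $K$; the paper instead proves $(1)\Leftrightarrow(3)$ and then $(2)\Leftrightarrow(3)$ by showing that condition (3) itself is insensitive to base change, which requires the non-trivial descent facts that the formation of $G_{\ant}$ commutes with field extension and that the shape $\mathbb{G}_{a}\times(\text{antiaffine semiabelian})$ descends. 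Your route needs only the easy forward direction of that stability, so it is leaner. Second, in $\neg(3)\Rightarrow\neg(1)$ with $n=1$ you split according to whether $G'$ is antiaffine (using a non-constant global function into the $\mathbb{G}_{a,K}$-factor, squared if it happens to be additive) rather than according to whether it is semiabelian; this bypasses the paper's structural detour showing that $H$ has no non-trivial torus quotient, that $H/H_{\ant}$ is a vector group, that $H\simeq H_{\ant}\times_K(H/H_{\ant})$, and that maximality of $n$ forces $H=H_{\ant}$. The price is a separate explicit construction in the $n=0$ case, but your observation that $(G_{\aff})_u\subseteq G_{\ant}=\ker p$ (so that $p\circ i=0$ and $\id_G\pm i\circ\chi\circ p$ are mutually inverse) is exactly the mechanism behind the last assertion of Theorem \ref{thm:explicit}, so nothing new is required and all sub-cases of $\neg(3)$ are covered.
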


\begin{proof}
We first show that (1) implies (3). We take $n \in \mathbb{N} \cup \{0\}$ maximal such that $G \simeq \mathbb{G}_{a,K}^n \times_K H$ as algebraic groups for some algebraic group $H$ over $K$. Being a quotient of $G$, $H$ is automatically connected and commutative. If $n \geq 2$, then there exist automorphisms of $\mathbb{G}_{a,K}^n$ that fix the neutral element, but are not automorphisms of algebraic groups, e.g., $(X_1,\hdots,X_n) \mapsto (X_1+X_2^2,X_2,\hdots,X_n)$, a contradiction with (1). So $n \leq 1$.

Suppose first that $H$ is not a semiabelian variety. Hence, there exists an injective homomorphism of algebraic groups $i: \mathbb{G}_{a,K} \hookrightarrow H$. Any non-trivial $K$-torus, being affine, admits a non-constant morphism of algebraic varieties to $\mathbb{G}_{a,K}$. By post-composing with a translation, we can assume that this morphism sends the neutral element to the neutral element. Hence, if $H$ has a non-trivial torus quotient, then Theorem \ref{thm:explicit} implies that there exists an automorphism of the algebraic variety $H$ that fixes its neutral element, but is not an automorphism of algebraic groups, another contradiction with (1). So in particular, $H/H_{\ant}$ has no non-trivial torus quotient, so must be a vector group by Proposition \ref{prop:summary} and Proposition 14.32 and Theorem 16.13(b) in \cite{MilneAG}.

By the same Proposition 14.32 and Theorem 16.13(b) in \cite{MilneAG}, $H_{\aff}$ is isomorphic to a product of an algebraic group of multiplicative type and a vector group. Since there are no non-trivial homomorphisms of algebraic groups from an algebraic group of multiplicative type to a unipotent algebraic group by Corollary 14.18(b) in \cite{MilneAG} and since any algebraic subgroup of a vector group over $K$ is a direct factor of that vector group, the canonical homomorphism $H_{\aff} \to H/H_{\ant}$, which is surjective by Proposition 3.1(i) in \cite{Brion09}, admits a section. It follows that $H \simeq H_{\ant} \times_K (H/H_{\ant})$. Since $n$ was taken maximal, we deduce that $H = H_{\ant}$. If $n = 1$, then we can consider the morphism of algebraic varieties $\id_G + i \circ \phi \circ p$, where $p: G \to G/H \simeq \mathbb{G}_{a,K}$ is the canonical projection and $\phi: \mathbb{G}_{a,K} \to \mathbb{G}_{a,K}$ is an arbitrary morphism of algebraic varieties that fixes the neutral element, but is not a homomorphism of algebraic groups. Since $p \circ i$ is the zero morphism, we have that
\[ (\id_G + i \circ \phi \circ p) \circ (\id_G - i \circ \phi \circ p) = (\id_G - i \circ \phi \circ p) \circ (\id_G + i \circ \phi \circ p) = \id_G,\]
so $\id_G + i \circ \phi \circ p$ is an automorphism of algebraic varieties. Furthermore, $\id_G + i \circ \phi \circ p$ fixes the neutral element. But $\id_G + i \circ \phi \circ p$ is not an automorphism of algebraic groups, otherwise it would follow that $\phi$ is a homomorphism of algebraic groups. We obtain a contradiction with (1) and it follows that $n = 0$. So $G$ is antiaffine and we are done.

Suppose now that $H$ is a semiabelian variety. If $n = 0$, then (3) holds, so suppose that $n = 1$. If $H$ is antiaffine, then (3) holds, so let us suppose that $H$ is not antiaffine and aim to obtain a contradiction. It follows that $H/H_{\ant}$ is a non-trivial algebraic group. By Proposition \ref{prop:summary}, $H/H_{\ant}$ is affine. By Proposition 3.1(i) in \cite{Brion09}, the canonical homomorphism $H_{\aff} \to H/H_{\ant}$ is surjective. Since $H$ is a semiabelian variety, $H_{\aff}$ is a torus and so $H/H_{\ant}$ is a non-trivial torus. As above, there exists a non-constant morphism of algebraic varieties $\psi: H/H_{\ant} \to \mathbb{G}_{a,K}$ that sends the neutral element to the neutral element. By Corollary 14.18(b) in \cite{MilneAG}, $\psi$ is not a homomorphism of algebraic groups. Let $\pi: G \simeq \mathbb{G}_{a,K} \times_K H \to (\mathbb{G}_{a,K} \times_K H)/(\mathbb{G}_{a,K} \times_K H_{\ant}) \simeq H/H_{\ant}$ denote the canonical projection and let $\iota: \mathbb{G}_{a,K} \simeq \mathbb{G}_{a,K} \times_K \{0_H\} \hookrightarrow \mathbb{G}_{a,K} \times_K H \simeq G$ denote the canonical inclusion, where $0_H$ denotes the neutral element of $H$. Consider the morphism of algebraic varieties $\id_G + \iota\circ \psi \circ \pi: G\to G$. Since $\pi \circ\iota$ is the zero morphism, we have that
\[ (\id_G + \iota \circ \psi \circ \pi) \circ (\id_G - \iota \circ \psi \circ \pi) = (\id_G - \iota \circ \psi \circ \pi) \circ (\id_G + \iota \circ \psi \circ \pi) = \id_G,\]
so $\id_G + \iota \circ \psi \circ \pi$ is an automorphism of algebraic varieties. Furthermore, $\id_G + \iota \circ \psi \circ \pi$ fixes the neutral element. But $\id_G + \iota \circ \psi \circ \pi$ is not an automorphism of algebraic groups, otherwise it would follow that $\psi$ is a homomorphism of algebraic groups. We obtain a contradiction with (1) as desired.

We now show that (3) implies (1). If $G$ is antiaffine, (1) follows from Lemma 1.5(i) in \cite{Brion09}. If $G$ is a semiabelian variety, (1) follows from Lemma 5.4.8 and Remark 5.4.9(i) in \cite{Brion17}. So suppose that there exists an antiaffine semiabelian variety $H$ such that $G \simeq \mathbb{G}_{a,K} \times_K H$. We will identify $G$ with $\mathbb{G}_{a,K} \times_K H$ for simplicity. Let $\phi: G \to G$ be any automorphism of algebraic varieties. We can assume that $\phi$ fixes the neutral element and want to show that $\phi$ is an automorphism of algebraic groups. Fix an algebraic closure $\bar{K}$ of $K$. By Remark 5.4.2(iii) and Proposition 5.4.5 in \cite{Brion17}, every morphism of algebraic varieties from $\mathbb{G}_{a,\bar{K}}$ to $H_{\bar{K}}$ is constant. Since $H$ is antiaffine and therefore $H_{\bar{K}}$ is antiaffine by Lemma 1.1 in \cite{Brion09}, every morphism from $H_{\bar{K}}$ to $\mathbb{G}_{a,\bar{K}}$ is constant. It follows that $\phi = \phi_1 \times_K \phi_2$, where $\phi_1: \mathbb{G}_{a,K} \to \mathbb{G}_{a,K}$ and $\phi_2: H \to H$ are automorphisms of algebraic varieties that fix the respective neutral element. By what we have already shown, $\phi_2$ is an automorphism of algebraic groups. On the other hand, $\phi_1$ is an automorphism of algebraic varieties that fixes the neutral element, so must be induced by a linear polynomial with vanishing constant term and must therefore also be an automorphism of algebraic groups. It follows that $\phi$ is an automorphism of algebraic groups and so (1) holds.

We now show that (2) and (3) are equivalent by showing that condition (3) is equivalent to the same condition for the base change of $G$ to some algebraic closure of $K$. By Lemma 1.1 in \cite{Brion09}, an algebraic group over $K$ is antiaffine if and only if its base change to some algebraic closure of $K$ is antiaffine. By Remark 5.4.2(iii) and Lemma 5.4.3 in \cite{Brion17}, an algebraic group over $K$ is a semiabelian variety if and only if its base change to some algebraic closure of $K$ is a semiabelian variety. Finally, if there exists an antiaffine semiabelian variety $H$ over $K$ such that $G \simeq \mathbb{G}_{a,K} \times_K H$ as algebraic groups, then $G_{\ant} \simeq H$ is a semiabelian variety and $G/G_{\ant} \simeq \mathbb{G}_{a,K}$ as algebraic groups. On the other hand, if $G_{\ant}$ is a semiabelian variety and $G/G_{\ant} \simeq \mathbb{G}_{a,K}$ as algebraic groups, then it follows by an analogous argument as the one for $H \simeq H_{\ant} \times_K (H/H_{\ant})$ earlier in this proof that $G \simeq (G/G_{\ant}) \times_K G_{\ant} \simeq \mathbb{G}_{a,K} \times_K G_{\ant}$ as algebraic groups. Since affineness is preserved under field extension, it follows from Lemma 1.1 in \cite{Brion09} and Proposition \ref{prop:summary} that the formation of $G_{\ant}$ commutes with field extension. Corollary 14.9 and Proposition 14.32 in \cite{MilneAG} imply that a connected affine commutative algebraic group over $K$ is isomorphic to $\mathbb{G}_{a,K}$ as an algebraic group if and only if its base change to some algebraic closure $\bar{K}$ of $K$ is isomorphic to $\mathbb{G}_{a,\bar{K}}$. All together, this shows that condition (3) is equivalent to the analogous condition for the base change of $G$ to some algebraic closure of $K$. By what we have shown already, this implies that (2) and (3) are equivalent.
\end{proof}

\section*{Acknowledgements}
I thank Michel Brion, Ziyang Gao, Marc Hindry, Bruno Kahn, Hanspeter Kraft, and Zev Rosengarten for helpful conversations and correspondence. I especially thank Bruno Kahn for suggesting to generalize Corollary \ref{cor:main} to Theorem \ref{thm:main} and to consider torsors. I thank the referees for their useful comments and suggestions, in particular for pointing out to me the proof of the automatic additivity and uniqueness of $\mathcal{G}$ in Theorem \ref{thm:main}, and I thank both Bruno Kahn and, in the semiabelian case, one of the referees for suggesting Theorem \ref{thm:partialanswer} and its proof. This work was supported by the Swiss National Science Foundation through the Early Postdoc.Mobility grant no. P2BSP2\_195703. I thank the Mathematical Institute of the University of Oxford and my host there, Jonathan Pila, for hosting me as a visitor for the duration of this grant. During the final revisions of this manuscript, I was employed by the Leibniz Universit\"at Hannover on the ERC grant of Ziyang Gao.

\vspace{\baselineskip}
\noindent
\framebox[\textwidth]{
\begin{tabular*}{0.96\textwidth}{@{\extracolsep{\fill} }cp{0.84\textwidth}}
\raisebox{-0.7\height}{%
    \begin{tikzpicture}[y=0.80pt, x=0.8pt, yscale=-1, inner sep=0pt, outer sep=0pt, 
    scale=0.12]
    \definecolor{c003399}{RGB}{0,51,153}
    \definecolor{cffcc00}{RGB}{255,204,0}
    \begin{scope}[shift={(0,-872.36218)}]
      \path[shift={(0,872.36218)},fill=c003399,nonzero rule] (0.0000,0.0000) rectangle (270.0000,180.0000);
      \foreach \myshift in 
           {(0,812.36218), (0,932.36218), 
    		(60.0,872.36218), (-60.0,872.36218), 
    		(30.0,820.36218), (-30.0,820.36218),
    		(30.0,924.36218), (-30.0,924.36218),
    		(-52.0,842.36218), (52.0,842.36218), 
    		(52.0,902.36218), (-52.0,902.36218)}
        \path[shift=\myshift,fill=cffcc00,nonzero rule] (135.0000,80.0000) -- (137.2453,86.9096) -- (144.5106,86.9098) -- (138.6330,91.1804) -- (140.8778,98.0902) -- (135.0000,93.8200) -- (129.1222,98.0902) -- (131.3670,91.1804) -- (125.4894,86.9098) -- (132.7547,86.9096) -- cycle;
    \end{scope}
    \end{tikzpicture}%
}
&
Gabriel Dill has received funding from the European Research Council (ERC) under the European Union's Horizon 2020 research and innovation programme (grant agreement n$^\circ$ 945714).
\end{tabular*}

}

\section*{Conflicts of Interest}
The author declares that he has no conflict of interest.

\bibliographystyle{amsalpha}

\bibliography{Bibliography}

\end{document}